\newtheorem{theorem}{Theorem}[section]
\newtheorem{lemma}[theorem]{Lemma}
\newtheorem{corollary}[theorem]{Corollary}
\theoremstyle{definition}
\newtheorem{definition}[theorem]{Definition}
\newtheorem{example}[theorem]{Example}
\newtheorem{algorithm}[theorem]{Algorithm}
\theoremstyle{remark}
\newtheorem{remark}[theorem]{Remark}
\numberwithin{equation}{section}
\def\vec#1{\mathchoice{\mbox{\boldmath$\displaystyle#1$}}
{\mbox{\boldmath$\textstyle#1$}}
{\mbox{\boldmath$\scriptstyle#1$}}
{\mbox{\boldmath$\scriptscriptstyle#1$}}}
\newcommand{\seq}[3]{\ensuremath{#1_{#2},\ldots,#1_{#3}}}
\newcommand{\norm}[1]{\left\|#1\right\|}
\newcommand{\Q}{\mathbb{Q}}
\newcommand{\Z}{\mathbb{Z}}
\newcommand{\C}{\mathbb{C}}
\newcommand{\R}{\mathbb{R}}
\newcommand{\res}{\mathrm{Res}}
\def\revdots{\mathinner{\mkern1mu\raise1pt\vbox{\kern7pt\hbox{.}}\mkern2mu\raise4pt\hbox{.}\mkern2mu \raise7pt\hbox{.}\mkern1mu}}
\begin{document}
\title{On Nonlinear Polynomial Selection for the Number Field Sieve}
\author{Nicholas Coxon}
\address{School of Mathematics and Physics, The University of Queensland, Brisbane, QLD 4072, Australia}
\curraddr{}
\email{ncoxon@maths.uq.edu.au}
\thanks{}
\date{June 28, 2013}
\keywords{Integer factorisation, number field sieve, polynomial selection}
\begin{abstract} Nonlinear polynomial selection algorithms for the number field sieve address the problem of constructing polynomials with small coefficients by reducing to instances of the well-studied problem of finding short vectors in lattices.  The reduction rests upon the construction of modular geometric progressions with small terms.  In this paper, the methods used to construct the geometric progressions are extended, resulting in the development of two nonlinear polynomial selection algorithms.
\end{abstract}
\maketitle
%

\section{Introduction}\label{sec:intro}

To factor an integer $N$, the number field sieve~\cite{lenstra93} begins with the selection of two low-degree coprime irreducible polynomials $f_{1},f_{2}\in\Z[x]$ with a common root modulo $N$.  If $F_{i}\in\Z[x,y]$ is the homogenisation of $f_{i}$, for $i=1,2$, then the time taken to factor $N$ depends on the supply of coprime integer pairs $(a,b)$ for which $F_{1}(a,b)$ and $F_{2}(a,b)$ are free of prime factors greater than a preselected bound.  Pairs with this property, called \emph{relations}, are identified by sieving.  The polynomial selection problem is to determine a choice of polynomials that help minimise the time taken by the sieve stage of the number field sieve.

The size of the values taken by the polynomials $F_{1}$ and $F_{2}$ is a key factor in determining the supply of relations \cite{murphy98,murphy99}.  Polynomial selection algorithms address this factor by endeavouring to generate polynomials with small coefficients.  The efforts of research into this problem are divided between two different approaches: so-called linear and nonlinear algorithms.  Linear algorithms were introduced during the development of the number field sieve~\cite{buhler93} and subsequently improved by Montgomery and Murphy~\cite{murphy99}, and Kleinjung~\cite{kleinjung06,kleinjung08}.  They have been used in a series of record-setting factorisations, culminating in the factorisation of a $768$-bit RSA modulus~\cite{kleinjung10}.  The designation as ``linear" is on account of the algorithms producing polynomial pairs such that one polynomial is linear.  This property results in a disparity in their degrees, and thus a disparity in the size of the values $F_{1}(a,b)$ and $F_{2}(a,b)$, reducing their yield of relations \cite[Section~6.2.7]{crandall05}.  In contrast to linear algorithms, nonlinear algorithms produce pairs of nonlinear polynomials with equal or almost equal degrees.  Nevertheless, they have received little practical attention since until recently nonlinear algorithms were only capable of producing pairs of quadratic polynomials, which limited the range for which they were competitive with linear algorithms to numbers of at most 110--120 digits \cite[Section~2.3.1]{murphy99}.  Consequently, the development of nonlinear algorithms has fallen behind that of linear algorithms.

Nonlinear polynomial selection algorithms for the number field sieve use geometric progressions with small terms modulo $N$ to generate pairs of nonlinear polynomials.  Montgomery introduced the approach with the two quadratics algorithm (see \cite[Section~5]{elkenbracht96} and \cite[Section~2.3.1]{murphy99}), which produces pairs of quadratic polynomials with coefficients of optimal size.  In addition, Montgomery~\cite{montgomery93,montgomery06} outlined a generalisation of the quadratic algorithm to higher degrees.  However, the problem of how to construct the geometric progressions required by the generalisation remains open for higher degrees.  Recent developments in geometric progression construction and relaxations of the requirements of Montgomery's approach have lead to a succession of new nonlinear algorithms~\cite{williams10,prest10,koo11}.  Building upon these developments, this paper presents two nonlinear polynomial generation algorithms.

The paper is organised as follows. In the next section, polynomial coefficient norms used throughout the paper are introduced, and generalities on real lattices are discussed.  Section~\ref{sec:gp-review} recalls the overall approach of nonlinear polynomial generation, introduces some properties of orthogonal lattices, then reviews nonlinear generation and existing algorithms in detail.  Finally, new nonlinear generation algorithms are presented and analysed in Section~\ref{sec:gp-new} and Section~\ref{sec:gp-new-mod}.

\section{Preliminaries}

\subsection{Skewed coefficient norms and the resultant bound}\label{sec:skewed-norms}

Coefficient norms used to measure the coefficient size of number field sieve polynomials are introduced.  Then a lower bound on the coefficient size of polynomial pairs with a common root modulo $N$ is derived.

\subsubsection{Skewed coefficient norms}

After polynomial selection, the number field sieve uses sieving to identify relations in some region $\mathcal{A}\subset\R^{2}$.  The form of sieve region is determined by the methods of sieving used, but may be approximated by a rectangular region of the form $\mathcal{A}=[-A,A]\times[0,B]$.  The area of the sieve region is approximately determined by the size of the input $N$.  Therefore, by assuming that the region's area is fixed, it follows that a rectangular sieve region $\mathcal{A}$ is determined by the parameter $s=A/B$, called the \emph{skew} of the region.   

Given two polynomials $f_{1}$ and $f_{2}$, the size properties of their respective homogenisations $F_{1}$ and $F_{2}$ over $\mathcal{A}$ are quantified by the integral
\begin{equation*}\label{eqn:int-over-A}
  \int_{\mathcal{A}}\left|F_{1}(x,y)F_{2}(x,y)\right|\,dxdy%
  =\left(AB\right)^{\frac{\deg f_{1}+\deg f_{2}+2}{2}}\cdot\int^{1}_{0}\int^{1}_{-1}\prod^{2}_{i=1}\left|F_{i}\left(x\sqrt{s},y/\sqrt{s}\right)\right|\,dxdy.
\end{equation*}
Polynomials that minimise this integral are expected to yield more relations in $\mathcal{A}$ than others.  The integrand on the right motivates the following choice of norm:
\begin{definition}  Let $f=\sum^{d}_{i=0}a_{i}x^{i}\in\R[x]$ be a degree $d$ polynomial.  For a given skew $s>0$, the \emph{skewed $2$-norm} of $f$ is defined to be
\begin{equation*}
  \norm{f}_{2,s}=\left(\sum^{d}_{i=0}\left|a_{i}s^{i-\frac{d}{2}}\right|^{2}\right)^{\frac{1}{2}}.
\end{equation*}
If $s=1$, then $\norm{f}_{2,s}$ is the \emph{$2$-norm} of $f$, which is denoted $\norm{f}_{2}$.
\end{definition}

A \emph{skew} of a polynomial $f\in\R[x]$ is any value $s>0$ for which $\norm{f}_{2,s}$ is minimal.  A number field sieve polynomial $f$ has good \emph{root properties} if the homogenised polynomial $F(x,y)$ has many roots modulo small primes, where a root $(r_{1},r_{2})$ modulo $p$ is considered as a point $(r_{1}:r_{2})$ on the projective line $\mathbf{P}^{1}(\mathbb{F}_{p})$.  Boender, Brent, Montgomery and Murphy~\cite{boender97,murphy98a,murphy98,murphy99} provide heuristic evidence which suggests good root properties increase a polynomial's yield.  The method of improving root properties by rotation~\cite{murphy99,gower03,bai11b} is more effective for polynomials with large skew, since large rotations may be used without impinging on their size properties.  Additionally, highly skewed polynomials are often better suited to sieving over a region with large skew, which may permit sieving to be performed more efficiently~\cite{kleinjung08}.  Thus, polynomial selection algorithms aim to generate highly skewed polynomials.

\subsubsection{The resultant bound}

For nonzero coprime polynomials $f_{1},f_{2}\in\Z[x]$ with a common root modulo $N$, the \emph{resultant bound} provides a lower bound on the $2$-norms of $f_{1}$ and $f_{2}$:
\begin{equation}\label{eqn:non-skewed-res-bnd}
  \norm{f_{1}}^{\deg f_{2}}_{2}\cdot\norm{f_{2}}^{\deg f_{1}}_{2}\geq N.
\end{equation}
The $2$-norm may greatly over estimate the coefficient size of highly skewed polynomials.  To provide tighter bounds, a generalisation of inequality \eqref{eqn:non-skewed-res-bnd} to the skewed $2$-norm is derived in this section.  To begin, the definition and some properties of the resultant of two polynomials are introduced.

Let $\mathbb{A}$ be a commutative ring (with unity).  Let $f=\sum^{m}_{i=0}a_{i}x^{i}$ and $g=\sum^{n}_{i=0}b_{i}x^{i}$ be non-constant polynomials in $\mathbb{A}[x]$ such that $\deg f=m$ and $\deg g=n$.  The \emph{Sylvester matrix} of $f$ and $g$, denoted $\mathrm{Syl}(f,g)$, is the $(m+n)\times(m+n)$ matrix
\begin{equation*}
\mathrm{Syl}(f,g)=\begin{pmatrix}
	a_{m} & a_{m-1} & \ldots  & \ldots & a_{0}   &         &        &       \\ 
        & a_{m}   & a_{m-1} & \ldots & \ldots  & a_{0}   &        &       \\
	      &         & \ldots  & \ldots & \ldots  & \ldots  & \ldots &       \\
	      &         &         & a_{m}  & a_{m-1} & \ldots & \ldots  & a_{0} \\
	b_{n} & b_{n-1} & \ldots  & \ldots & b_{0}   &         &        &       \\
	      & b_{n}   & b_{n-1} & \ldots & \ldots  & b_{0}   &        &       \\
	      &         & \ldots  & \ldots & \ldots  & \ldots  & \ldots &       \\
	      &         &         & b_{n}  & b_{n-1} & \ldots  & \ldots & b_{0}
\end{pmatrix}
\end{equation*}
where there are $n$ rows containing the $a_{i}$, $m$ rows containing the $b_{i}$, and all empty entries are $0$.  The \emph{resultant} of $f$ and $g$, denoted $\res(f,g)$, is equal to the determinant of the Sylvester matrix $\mathrm{Syl}(f,g)$.  The resultant $\res(f,g)$ has the following well-known properties (see \cite[Chapter~IV, \S8]{lang02}):
\begin{itemize}
	\item If $\mathbb{A}=\C$, $\seq{\alpha}{1}{m}$ are the roots of $f$ and $\seq{\beta}{1}{n}$ the roots of $g$, then
  \begin{equation}\label{res:poisson}
    \res(f,g)=a^{n}_{m}b^{m}_{n}\prod_{i,j}(\alpha_{i}-\beta_{j}).
  \end{equation}
  \item There exist polynomials $u,v\in\mathbb{A}[x]$ such that $uf+vg=\res(f,g)$.
\end{itemize}

For coprime non-constant polynomials $f_{1},f_{2}\in\Z[x]$ with a common root modulo $N$, the second property implies that $N$ divides their resultant, which is nonzero by the first property.  Thus, $N\leq|\res(f_{1},f_{2})|$ and inequality \eqref{eqn:non-skewed-res-bnd} is obtained by using Hadamard's determinant theorem~\cite{hadamard93} to obtain an upper bound on $|\det\mathrm{Syl}(f_{1},f_{2})|$.  Hadamard's theorem is generalised by a result of Fischer~\cite{fischer08}, which states that the determinant of a positive definite Hermitian matrix $H$, partitioned in the form
\begin{equation*}
  H=\begin{pmatrix} H_{1} & X \\ X^{*} & H_{2} \end{pmatrix}
\end{equation*}
such that $H_{1}$ and $H_{2}$ are square matrices, satisfies $\det H\leq\det H_{1}\cdot\det H_{2}$ with equality if and only if $X=0$.  In the following, Fischer's inequality is used to generalise the resultant bound.
\begin{lemma}\label{lem:skewed-resultant-bnd}  Let $f=\sum^{m}_{i=0}a_{i}x^{i}$ and $g=\sum^{n}_{i=0}b_{i}x^{i}$ be non-constant polynomials with real coefficients such that $a_{m}\neq 0$ and $b_{n}\neq 0$.  For $s>0$, define $\theta_{s}=\theta_{s}(f,g)$ to be the angle between the vectors $(a_{i}s^{i})_{0\leq i\leq\max\{m,n\}}$ and $(b_{i}s^{i})_{0\leq i\leq\max\{m,n\}}$, where $a_{i}=0$ if $i>m$, and $b_{i}=0$ if $i>n$.  Then
\begin{equation}\label{eqn:skewed-resultant-bnd}
  \left|\res(f,g)\right|\leq\left|\sin\theta_{s}\right|^{\min\{m,n\}}\cdot\norm{f}^{n}_{2,s}\norm{g}^{m}_{2,s},\quad\text{for all $s>0$}.
\end{equation}
If $m\neq n$, then the inequality is strict for all $s>0$.
\end{lemma}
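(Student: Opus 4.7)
The plan is to reduce to the unskewed case via a substitution, exploit the shift structure of the Sylvester matrix via row operations, and conclude by Hadamard's inequality; strictness for $m\neq n$ will follow by inspecting when Hadamard can be tight.

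\textbf{Reduction.} First, setting $\hat{f}(x) = s^{-m/2}f(sx)$ and $\hat{g}(x) = s^{-n/2}g(sx)$, I would verify that $\norm{\hat{f}}_{2} = \norm{f}_{2,s}$, $\norm{\hat{g}}_{2} = \norm{g}_{2,s}$, $\res(\hat{f},\hat{g}) = \res(f,g)$ (via \eqref{res:poisson}), and, since angles are scale-invariant, that the angle between the coefficient vectors of $\hat{f}$ and $\hat{g}$ (padded to length $\max\{m,n\}+1$) equals $\theta_{s}(f,g)$. It thus suffices to prove the claim for $s=1$, and using $\res(f,g) = (-1)^{mn}\res(g,f)$ I may also assume $m \le n$; write $\theta$ for the resulting angle.

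\textbf{Row reduction and Hadamard.} Partition $S = \binom{F}{G}$ into the $n\times(m+n)$ block from $f$ and the $m\times(m+n)$ block from $g$. Let $\mathbf{a}' = (0,\ldots,0,a_m,\ldots,a_0) \in \R^{n+1}$ be the reversed coefficient vector of $f$ padded with $n-m$ leading zeros, and set $\mathbf{b} = (b_n,\ldots,b_0)$. Since the angle between $\mathbf{a}'$ and $\mathbf{b}$ is $\theta$, decomposing $\mathbf{b} = c\mathbf{a}' + \mathbf{r}$ with $\mathbf{r}\perp\mathbf{a}'$ yields $\norm{\mathbf{r}} = \norm{g}_{2}|\sin\theta|$. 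The crucial observation is that $\mathbf{a}'$ placed into $\R^{m+n}$ starting at column $j$ is exactly row $j+n-m$ of $F$ (a valid row since $1\le j+n-m \le n$). Hence, for each $j = 1, \ldots, m$, subtracting $c$ times row $j+n-m$ of $F$ from row $j$ of $G$ preserves $|\det S|$ and replaces row $j$ of $G$ by the $j$-th shift of $\mathbf{r}$, whose norm is $\norm{g}_{2}|\sin\theta|$. Applying Hadamard's inequality to the resulting matrix $S'$ then gives
\[
|\res(f,g)| = |\det S'| \le \norm{f}_{2}^{n}\bigl(\norm{g}_{2}|\sin\theta|\bigr)^{m} = \norm{f}_{2,s}^{n}\norm{g}_{2,s}^{m}|\sin\theta_{s}|^{\min\{m,n\}}.
\]

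\textbf{Strictness for $m\neq n$.} Equality in Hadamard above forces all $m+n$ rows of $S'$ to be pairwise orthogonal. Assuming $m<n$, I would argue by cases: either $f \ne a_m x^m$, in which case the autocorrelation $R_{\mathbf{a}}(k) = \sum_u a_u a_{u-k}$ cannot vanish for every $k \in \{1,\ldots,m\} \subseteq \{1,\ldots,n-1\}$ (since $R_{\mathbf{a}}(m) = a_0 a_m = 0$ would force $a_0 = 0$, and then inductively $a_1 = \cdots = a_{m-1} = 0$), so two shifts of $\mathbf{a}$ in $F$ are non-orthogonal; or $f = a_m x^m$, in which case the first row of $F$ is $(a_m, 0, \ldots, 0)$ and its inner product with the first shift of $\mathbf{r}$ equals $a_m \mathbf{r}_1 = a_m b_n \ne 0$ (the equality $\mathbf{r}_1 = b_n$ uses $\mathbf{a}'_1 = 0$ when $m<n$). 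Either way Hadamard is strict. The hard part here will be packaging this case split cleanly; the underlying principle is that the structural mismatch $\mathbf{a}'_1 = 0 \ne \mathbf{b}_1$ always manifests through one of these two routes.
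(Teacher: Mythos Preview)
Your proof is correct and takes a genuinely different route from the paper. Both proofs begin with the same reduction to $s=1$ via the substitution $x\mapsto sx$. From there, the paper pairs row $n-m+i$ of the $f$-block with row $n+i$ of the $g$-block to form $2\times(m+n)$ matrices $A_i$, builds the block Gram matrix $Q_1=(A_iA_j^t)$, and applies Fischer's inequality repeatedly; the $|\sin\theta|$ factor emerges from $\det A_iA_i^t$. You instead perform explicit row operations: subtract from each $g$-row the component along the aligned $f$-row, leaving a residual of norm $\|g\|_{2}|\sin\theta|$, and then apply Hadamard directly to the reduced matrix. Your approach is more elementary (Hadamard only, no Fischer) and makes the geometric source of the $|\sin\theta|$ factor transparent as the norm of an orthogonal projection residual. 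The paper's block-matrix framework is a bit more systematic and yields the strictness analysis in one sweep: the equality conditions in Fischer and Hadamard give a linear system in the $a_i$ that forces $a_0=\cdots=a_{m-1}=0$ and then $b_n=0$. Your strictness argument reaches the same endpoint via a case split (autocorrelation nonvanishing when $f$ is not a monomial; direct inner product $a_mb_n$ when it is), which is equally valid but slightly less uniform. One small point worth making explicit in your write-up: when $m<n$ the padded vectors cannot be parallel, so $\sin\theta>0$ and every row of $S'$ is nonzero; this is what lets you pass from ``equality in Hadamard'' to ``rows pairwise orthogonal'' without worrying about the degenerate zero-row case.
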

\begin{proof}  Without loss of generality, since \eqref{eqn:skewed-resultant-bnd} is unaltered by interchanging $f$ and $g$, assume that $m\leq n$.  For all $s>0$, the right-hand side of the inequality in \eqref{eqn:skewed-resultant-bnd} is nonnegative if $m=n$, and positive if $m\neq n$.  Consequently, the lemma holds if $\res(f,g)=0$.  Therefore, assume that $\res(f,g)$ is nonzero.

If $\seq{\alpha}{1}{m}\in\C$ are the roots of $f$ and $\seq{\beta}{1}{n}\in\C$ the roots of $g$, then
\begin{equation}\label{eqn:skewed-resultant-bnd-i}
\begin{split}
  \res(f,g) &= a^{n}_{m}b^{m}_{n}\prod_{i,j}(\alpha_{i}-\beta_{j})%
             = \left(a_{m}s^{\frac{m}{2}}\right)^{n}\left(b_{n}s^{\frac{n}{2}}\right)^{m}\prod_{i,j}\left(\frac{\alpha_{i}}{s}-\frac{\beta_{j}}{s}\right)\\
            &= \res\left(s^{-\frac{m}{2}}f(sx),s^{-\frac{n}{2}}g(sx)\right),\quad\text{for all $s>0$}. 
\end{split}            
\end{equation}
Set $S=\mathrm{Syl}\left(s^{-m/2}f(sx),s^{-n/2}g(sx)\right)$ for some $s>0$, and define $2\times(m+n)$ submatrices $A_{1},\ldots,A_{m}$ of $S$ as follows:  the first and second rows of $A_{i}$ are equal to the $(n-m+i)$th and $(n+i)$th rows of $S$, respectively.  Define the $2m\times 2m$ block matrix $Q_{1}=(A_{i}A^{t}_{j})_{1\leq i,j\leq m}$.  If $m=n$, then there exists a $2m\times 2m$ permutation matrix $P_{1}$ such that $Q_{1}=P_{1}SS^{t}P^{t}_{1}$.  Consequently, \eqref{eqn:skewed-resultant-bnd-i} implies that
\begin{equation}\label{eqn:skewed-resultant-bnd-ii}
  \det Q_{1}=\det(P_{1})^{2}\cdot\det(S)^{2}=\res(f,g)^{2},\quad\text{if $m=n$}.
\end{equation}

The assumption that $\res(f,g)$ is nonzero and \eqref{eqn:skewed-resultant-bnd-i} imply that the rows of $S$ are linearly independent.  Thus, the matrix $Q_{1}$ and its submatrices $Q_{k}=(A_{i}A^{t}_{j})_{k\leq i,j\leq m}$, for $2\leq k\leq m$, are positive definite Hermitian.  Consequently, Fischer's inequality implies that $\det Q_{i}\leq\det A_{i}A^{t}_{i}\cdot\det Q_{i+1}$, for $1\leq i\leq m-1$.  The construction of $A_{1},\ldots,A_{m}$ implies that the matrices $A_{i}A^{t}_{i}$ are equal, with
\begin{equation*}
  A_{i}A^{t}_{i}=\begin{pmatrix}
    \norm{f}^{2}_{2,s} & \norm{f}_{2,s}\norm{g}_{2,s}\cos\theta_{s}\\
    \norm{f}_{2,s}\norm{g}_{2,s}\cos\theta_{s} & \norm{g}^{2}_{2,s}
  \end{pmatrix},\quad\text{for $1\leq i\leq m$}.
\end{equation*}
Hence,
\begin{equation}\label{eqn:skewed-resultant-bnd-iii}
  \det Q_{1}\leq\prod^{m}_{i=1}\det A_{i}A^{t}_{i}=\left(\left(\sin\theta_{s}\right)^{m}\cdot\norm{f}^{m}_{2,s}\norm{g}^{m}_{2,s}\right)^{2}.
\end{equation}
As $s$ was arbitrary, combining \eqref{eqn:skewed-resultant-bnd-ii} and \eqref{eqn:skewed-resultant-bnd-iii} shows that \eqref{eqn:skewed-resultant-bnd} holds if $m=n$.  Therefore, assume that $m\neq n$, and let $A_{0}$ be the $(n-m)\times(m+n)$ submatrix of $S$ consisting of its first $n-m$ rows.  Define the $(m+n)\times(m+n)$ block matrix $Q_{0}=(A_{i}A^{t}_{j})_{0\leq i,j\leq m}$.  Then there exists a $(m+n)\times(m+n)$ permutation matrix $P_{0}$ such that $Q_{0}=P_{0}SS^{t}P^{t}_{0}$.  Thus, \eqref{eqn:skewed-resultant-bnd-i} implies that
\begin{equation*}
  \det Q_{0}=\det(P_{0})^{2}\cdot\det(S)^{2}=\res(f,g)^{2}\neq 0.
\end{equation*}
Therefore, $Q_{0}$ is positive definite Hermitian and, by using Fischer's inequality to bound the determinant of $Q_{0}$, it follows that
\begin{equation}\label{eqn:skewed-resultant-bnd-iv}
  \res(f,g)^{2}\leq\det A_{0}A^{t}_{0}\cdot\det Q_{1}
\end{equation}
with equality if and only if $A_{0}A^{t}_{j}=0$, for $1\leq j\leq m$.  Hadamard's determinant theorem (alternatively, Fischer's inequality) implies that
\begin{equation}\label{eqn:skewed-resultant-bnd-v}
  \det A_{0}A^{t}_{0}\leq\norm{f}^{2(n-m)}_{2,s}
\end{equation}
with equality if and only if the rows of $A_{0}$ are pairwise orthogonal.  Therefore, as $s$ was arbitrary, combining inequalities \eqref{eqn:skewed-resultant-bnd-iii}, \eqref{eqn:skewed-resultant-bnd-iv} and \eqref{eqn:skewed-resultant-bnd-v} yields \eqref{eqn:skewed-resultant-bnd} if $m\neq n$.

Suppose for contradiction that $m\neq n$ and equality holds in \eqref{eqn:skewed-resultant-bnd}.  Then equality must hold in \eqref{eqn:skewed-resultant-bnd-iv} and \eqref{eqn:skewed-resultant-bnd-v}.  Thus, the rows of $A_{0}$ are pairwise orthogonal and $A_{0}A^{t}_{j}=0$, for $1\leq j\leq m$.  It follows that the first row of $S$ is orthogonal to rows $2,\ldots,m+1$ and $n+1$.  Therefore,
\begin{equation*}
  \sum^{k}_{i=0}a_{i}a_{m-k+i}s^{2i-k}=0,\text{ for $0\leq k\leq m-1$;}\quad\text{and}\quad\sum^{m}_{i=0}a_{i}b_{n-m+i}s^{2i+\frac{n-3m}{2}}=0.
\end{equation*}
By using the assumption that $a_{m}$ and $s$ are nonzero to successively eliminate coefficients, it follows that $a_{0},\ldots,a_{m-1}$ and thus $b_{n}$ are all zero.  This is a contradiction since $b_{n}\neq 0$.  Hence, if $m\neq n$, then the inequality \eqref{eqn:skewed-resultant-bnd} is strict for all $s>0$.
\end{proof}

\begin{corollary}\label{cor:skewed-resultant-bnd}  Let $f_{1},f_{2}\in\Z[x]$ be non-constant coprime polynomials with a common root modulo a positive integer $N$.  Then
\begin{equation*}\label{eqn:skewed-res-bnd}
  N\leq\left|\sin\theta_{s}\right|^{\min\{\deg f_{1},\deg f_{2}\}}\cdot\norm{f_{1}}^{\deg f_{2}}_{2,s}\norm{f_{2}}^{\deg f_{1}}_{2,s},\quad\text{for all $s>0$}.
\end{equation*}
If $m\neq n$, then the inequality is strict for all $s>0$.
\end{corollary}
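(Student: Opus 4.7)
The proof is essentially a direct combination of Lemma \ref{lem:skewed-resultant-bnd} with the standard divisibility property of the resultant, so the plan is short. First I would establish the integer inequality $N\leq|\res(f_1,f_2)|$. Since $f_1,f_2\in\Z[x]$ are coprime, the resultant is nonzero (this uses the Poisson-style product formula \eqref{res:poisson} over $\C$, together with coprimality to rule out shared roots). Since $f_1$ and $f_2$ have a common root modulo $N$, the Bézout-type identity $uf_1+vf_2=\res(f_1,f_2)$ with $u,v\in\Z[x]$ shows that $N\mid\res(f_1,f_2)$, and combining these two facts gives $N\leq|\res(f_1,f_2)|$.

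Next I would simply apply Lemma \ref{lem:skewed-resultant-bnd} to the pair $f_1,f_2$. Both polynomials are non-constant with nonzero leading coefficients (since they lie in $\Z[x]$ and are non-constant), so the hypotheses of the lemma are met. For any $s>0$, the lemma gives
\begin{equation*}
  |\res(f_1,f_2)|\leq|\sin\theta_s|^{\min\{\deg f_1,\deg f_2\}}\cdot\norm{f_1}_{2,s}^{\deg f_2}\norm{f_2}_{2,s}^{\deg f_1},
\end{equation*}
and chaining this with the resultant bound from the previous paragraph yields the corollary's inequality.

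For the strictness claim when $\deg f_1\neq\deg f_2$, no additional work is required: Lemma \ref{lem:skewed-resultant-bnd} already asserts strict inequality in this unequal-degree case for every $s>0$, and this strictness is preserved on the right-hand side of $N\leq|\res(f_1,f_2)|\leq(\cdots)$. I do not anticipate any real obstacle here; the only minor care needed is checking the nonvanishing of $\res(f_1,f_2)$ so that ``$N$ divides the resultant'' actually gives a meaningful bound, and noting that the symbols $m,n$ in the final sentence of the statement refer to $\deg f_1$ and $\deg f_2$ respectively.
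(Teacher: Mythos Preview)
Your proposal is correct and follows exactly the approach the paper intends: the paragraph preceding Lemma~\ref{lem:skewed-resultant-bnd} already establishes $N\leq|\res(f_{1},f_{2})|$ from the two listed properties of the resultant, and the corollary is stated without separate proof because it is immediate from chaining this with Lemma~\ref{lem:skewed-resultant-bnd}. Your handling of the strictness clause and of the meaning of $m,n$ is also in line with the paper.
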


\begin{remark}\label{rmk:skewed-resultant-bnd}  For non-constant polynomials $f,g\in\R[x]$ such that $g=cf$ for some $c\in\R$, the bound in Lemma~\ref{lem:skewed-resultant-bnd} is attained for all  $s>0$:  $\res(f,g)=0$ and $\theta_{s}(f,g)=0$, for all $s>0$.  The bound in Lemma~\ref{lem:skewed-resultant-bnd} is attained for $s>0$ and $d\geq 1$ by polynomials $f_{1}=x^{d}-s^{d}$ and $f_{2}=x^{d}+s^{d}$:  the product formula \eqref{res:poisson} implies that $\res(f_{1},f_{2})=(2s^{d})^{d}$, $\theta_{s}(f_{1},f_{2})=\pi/2$ and $\norm{f_{1}}^{d}_{2,s}\cdot\norm{f_{2}}^{d}_{2,s}=(s^{d/2}\sqrt{2})^{2d}=(2s^{d})^{d}$.  If $d=1$ and $s$ is an integer, then the lower bound in Corollary~\ref{cor:skewed-resultant-bnd} is also attained, since $x-s$ and $x+s$ have a common root modulo $2s$.
\end{remark}

For integers currently within reach of factorisation by the number field sieve, the optimal choice of degree sum $\deg f_{1}+\deg f_{2}$, the main complexity parameter of the algorithm \cite[Section~11]{buhler93}, remains small \cite[Section~3.1]{murphy99}.  For example, the factorisation of a $768$-bit RSA modulus by Kleinjung et al.~\cite{kleinjung10} and the special number field sieve~\cite{lenstra93b} factorisation of $2^{1039}-1$ by Aoki et al.~\cite{aoki07} both used polynomial pairs with a degree sum of $7$.  Corollary~\ref{cor:skewed-resultant-bnd} shows that the restriction to small degree sums implies that a pair of number field sieve polynomials will necessarily have large coefficients.  For large $N$ without special form, how to find polynomials that are close to attaining the lower bound in Corollary~\ref{cor:skewed-resultant-bnd} remains an open problem.
  
\subsection{Lattices in $\R^{n}$}

This section presents necessary background on lattices and lattice reduction.  For further background, see \cite{cohen93,martinet03,lenstra08,nguyen10}.

A \emph{lattice} in $\R^{n}$ is a subgroup $\Lambda$ of $\R^{n}$ with the following property:  there exists $\R$-linearly independent vectors $\seq{\vec{b}}{1}{k}\in\R^{n}$ such that $\Lambda=\sum^{k}_{i=1}\Z\vec{b}_{i}$.  The vectors $\seq{\vec{b}}{1}{k}$ are said to form a \emph{basis} of $\Lambda$, denoted throughout by a $k$-tuple $\mathcal{B}=(\seq{\vec{b}}{1}{k})$, and $k$ is called the \emph{dimension} or \emph{rank} of $\Lambda$.  When written with respect to the canonical orthonormal basis of $\R^{n}$, if $\vec{b}_{i}=(b_{i,1},\ldots,b_{i,n})$ for $1\leq i\leq k$, then the $k\times n$ matrix $B=(b_{i,j})_{1\leq i\leq k,1\leq j\leq n}$ is called a \emph{basis matrix} of $\Lambda$.  The \emph{Gram matrix} of $\mathcal{B}$ is the $k\times k$ symmetric matrix $BB^{t}$.  Let $\mathcal{B}_{1}$ and $\mathcal{B}_{2}$ be bases of $\Lambda$ with respective basis matrices $B_{1}$ and $B_{2}$.  Then there exists a matrix $U\in\mathrm{GL}_{k}(\Z)$ such that $UB_{1}=B_{2}$.  Thus, the Gram matrix of $\mathcal{B}_{2}$ is $Q_{2}=UQ_{1}U^{t}$, where $Q_{1}$ is the Gram matrix of $\mathcal{B}_{1}$.  Therefore, the determinant of the Gram matrix is independent of the choice of basis.  The \emph{determinant} of $\Lambda$ is defined to be $\det\Lambda=\sqrt{\det Q}$, where $Q$ is the Gram matrix of one of its bases.  The zero lattice $\{\vec{0}\}$ is $0$-dimensional and has determinant $1$.

The \emph{sublattices} of a lattice are its subgroups.  If $\Lambda'$ is a sublattice of $\Lambda$ such that its dimension is equal to that of $\Lambda$, then $\Lambda'$ is called a \emph{full-rank sublattice}.  A sublattice $\Lambda'$ of $\Lambda$ is full-rank if and only if $[\Lambda:\Lambda']$ is finite, in which case $\det\Lambda'=[\Lambda:\Lambda']\cdot\det\Lambda$.  Let $\left\langle\vec{x},\vec{y}\right\rangle\mapsto\vec{x}\cdot\vec{y}$ denote the usual inner product on $\R^{n}$.  The \emph{dual lattice} of $\Lambda$ is
\begin{equation*}
  \Lambda^{\times}=\{\vec{x}\in\mathrm{span}(\Lambda)\mid\text{$\left\langle\vec{x},\vec{y}\right\rangle\in\Z$, for all $\vec{y}\in\Lambda$}\}.
\end{equation*}
For any basis $\mathcal{B}$ of $\Lambda$, the dual basis $\mathcal{B}^{\times}$ of $\mathrm{span}(\Lambda)$ is a basis of $\Lambda^{\times}$.  A lattice with $\Lambda^{\times}=\Lambda$ is called \emph{unimodular}.  The lattice $\Z^{n}$ is unimodular.

Let $\norm{.}_{2}$ be the Euclidean norm on $\R^{n}$.  For a $k$-dimensional lattice $\Lambda$ the \emph{$i$th successive minimum} $\lambda_{i}(\Lambda)$ of $\Lambda$, for $1\leq i\leq k$, is defined to be the minimum of $\max_{1\leq j\leq i}\norm{\vec{v}_{j}}_{2}$ over all linearly independent lattice vectors $\seq{\vec{v}}{1}{i}\in\Lambda$.  Minkowski's second theorem (see \cite[p.~35]{nguyen10}) states that the geometric mean of the first $t$ successive minima is at most $\sqrt{\gamma_{k}}\det(\Lambda)^{\frac{1}{k}}$, for $1\leq t\leq k$, where $\gamma_{k}$ is Hermite's constant (see \cite[p.~20]{nguyen10}). 

An LLL-reduced basis $(\seq{\vec{b}}{1}{k})$ of a lattice $\Lambda$ has the property that $\norm{\vec{b}_{i}}_{2}$ approximates the $i$th successive minimum $\lambda_{i}(\Lambda)$ for $1\leq i\leq k$:
\begin{theorem}\label{thm:LLL-properties}   Let $(\seq{\vec{b}}{1}{k})$ be an LLL-reduced basis of a $k$-dimensional lattice $\Lambda\subset\R^{n}$.  Then
\begin{enumerate}
  \item\label{lll:bi-minima-bnd} $\norm{\vec{b}_{i}}_{2}\leq 2^{(k-1)/2}\lambda_{i}(\Lambda)$ for $1\leq i\leq k$; and
  \item\label{lll:bi-bnd} if $\Lambda\subseteq\Z^{n}$, then $\norm{\vec{b}_{i}}_{2}\leq 2^{\frac{k(k-1)}{4(k-i+1)}}\det\Lambda^{\frac{1}{k-i+1}}$ for $1\leq i\leq k$.
\end{enumerate}
\end{theorem}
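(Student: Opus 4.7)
The plan is to prove both bounds by passing through the Gram--Schmidt orthogonalisation $\vec{b}^{*}_{1},\ldots,\vec{b}^{*}_{k}$ of the basis, together with Gram--Schmidt coefficients $\mu_{i,j}=\langle\vec{b}_{i},\vec{b}^{*}_{j}\rangle/\norm{\vec{b}^{*}_{j}}^{2}_{2}$. The two defining properties of LLL-reduction I would invoke are size-reduction ($|\mu_{i,j}|\leq 1/2$ for $j<i$) and the Lovász condition, from which one extracts the inequality $\norm{\vec{b}^{*}_{i}}^{2}_{2}\geq\tfrac{1}{2}\norm{\vec{b}^{*}_{i-1}}^{2}_{2}$. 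Iterating the Lovász bound yields the key estimate $\norm{\vec{b}^{*}_{j}}^{2}_{2}\leq 2^{l-j}\norm{\vec{b}^{*}_{l}}^{2}_{2}$ for $j\leq l$, and applying Pythagoras together with size-reduction gives $\norm{\vec{b}_{i}}^{2}_{2}=\norm{\vec{b}^{*}_{i}}^{2}_{2}+\sum_{j<i}\mu_{i,j}^{2}\norm{\vec{b}^{*}_{j}}^{2}_{2}\leq 2^{i-1}\norm{\vec{b}^{*}_{i}}^{2}_{2}$ after summing the resulting geometric series.

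For part~\eqref{lll:bi-minima-bnd}, I would take linearly independent lattice vectors $\vec{v}_{1},\ldots,\vec{v}_{i}\in\Lambda$ realising $\lambda_{i}(\Lambda)$. Writing each $\vec{v}_{j}$ in the basis $(\vec{b}_{1},\ldots,\vec{b}_{k})$ with integer coefficients, let $n(j)$ be the largest index with a nonzero coefficient; since the coefficient is a nonzero integer, $\norm{\vec{v}_{j}}_{2}\geq\norm{\vec{b}^{*}_{n(j)}}_{2}$. Linear independence prevents all of $\vec{v}_{1},\ldots,\vec{v}_{i}$ from lying in $\mathrm{span}(\vec{b}_{1},\ldots,\vec{b}_{i-1})$, so some $j$ has $n(j)\geq i$. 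Combining the two Gram--Schmidt estimates above then gives
\begin{equation*}
  \norm{\vec{b}_{i}}_{2}\leq 2^{(i-1)/2}\norm{\vec{b}^{*}_{i}}_{2}\leq 2^{(i-1)/2}\cdot 2^{(n(j)-i)/2}\norm{\vec{b}^{*}_{n(j)}}_{2}\leq 2^{(k-1)/2}\norm{\vec{v}_{j}}_{2}\leq 2^{(k-1)/2}\lambda_{i}(\Lambda),
\end{equation*}
as required.

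For part~\eqref{lll:bi-bnd} I would exploit the integrality hypothesis through the sublattices $\Lambda_{j}=\Z\vec{b}_{1}+\cdots+\Z\vec{b}_{j}$: each has integer Gram matrix with positive determinant, so $\det\Lambda_{j}=\prod_{l\leq j}\norm{\vec{b}^{*}_{l}}_{2}\geq 1$. Using $\det\Lambda=\prod_{l=1}^{k}\norm{\vec{b}^{*}_{l}}_{2}$ and the lower bound on $\det\Lambda_{i-1}$ yields $\prod_{l=i}^{k}\norm{\vec{b}^{*}_{l}}_{2}\leq\det\Lambda$. Multiplying the inequality $\norm{\vec{b}^{*}_{i}}_{2}\leq 2^{(l-i)/2}\norm{\vec{b}^{*}_{l}}_{2}$ over $l=i,\ldots,k$ then gives $\norm{\vec{b}^{*}_{i}}_{2}^{k-i+1}\leq 2^{(k-i)(k-i+1)/4}\det\Lambda$, and combining with $\norm{\vec{b}_{i}}_{2}\leq 2^{(i-1)/2}\norm{\vec{b}^{*}_{i}}_{2}$ produces an upper bound on $\norm{\vec{b}_{i}}_{2}$ of the form $2^{\alpha(i,k)}\det\Lambda^{1/(k-i+1)}$. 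The final step is an elementary verification that the exponent $\alpha(i,k)=(i-1)/2+(k-i)/4$ I obtain satisfies $\alpha(i,k)\leq k(k-1)/(4(k-i+1))$ for $1\leq i\leq k$, which reduces via the substitution $j=k-i+1$ to $(k-j)(k-j-1)\geq 0$; this is the only non-routine arithmetic in the argument and is the step most likely to need care.
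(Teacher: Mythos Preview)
Your argument is correct. Both parts follow the standard Gram--Schmidt route: the chain $\norm{\vec{b}^{*}_{j}}^{2}_{2}\leq 2^{l-j}\norm{\vec{b}^{*}_{l}}^{2}_{2}$ from the Lov\'asz condition, the bound $\norm{\vec{b}_{i}}^{2}_{2}\leq 2^{i-1}\norm{\vec{b}^{*}_{i}}^{2}_{2}$ from size-reduction, and the pigeonhole step on the indices $n(j)$ for part~\eqref{lll:bi-minima-bnd} are all handled cleanly. For part~\eqref{lll:bi-bnd} the use of the integrality hypothesis via $\det\Lambda_{i-1}\geq 1$ is exactly the right idea, and your exponent check $(k-j)(k-j-1)\geq 0$ is valid for all $1\leq j\leq k$.

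The only point of comparison is that the paper does not actually prove this theorem: it simply attributes part~\eqref{lll:bi-minima-bnd} to Lenstra, Lenstra and Lov\'asz and part~\eqref{lll:bi-bnd} to May, and moves on. So you have supplied a self-contained proof where the paper gives only citations; your argument is essentially the one found in those references.
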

Property~\eqref{lll:bi-minima-bnd} of Theorem~\ref{thm:LLL-properties} is due to Lenstra, Lenstra and Lov\'as~\cite{lll}, and property~\eqref{lll:bi-bnd} is due to May~\cite[Theorem~4]{may03}.  The $\mathrm{L}^{2}$ algorithm~\cite{nguyen09,nguyen05} transforms an arbitrary basis $(\seq{\vec{b}}{1}{k})$ of a $k$-dimensional lattice $\Lambda\subseteq\Z^{n}$ into an LLL-reduced basis in $O(k^{4}n(k+\log\beta)\log\beta)$ bit operations, where $\beta=\max_{1\leq i\leq k}\norm{\vec{b}_{i}}_{2}$.

\section{Nonlinear polynomial selection}\label{sec:gp-review}

Nonlinear polynomial generation algorithms are based on the observation that bounded degree integer polynomials with a prescribed root modulo $N$ are characterised by an orthogonality condition on their coefficient vectors:  an integer polynomial $f=\sum^{d}_{i=0}a_{i}x^{i}$ of degree \emph{at most} $d$ has $m$ as a root modulo $N$ if and only if the \emph{coefficient vector} $(\seq{a}{0}{d})$ is orthogonal to $(1,m,\ldots,m^{d})$ modulo $N$.  The set of all such coefficient vectors, denoted $L_{m,d}$, forms a lattice in $\Z^{d+1}$ \cite[Section~12.2]{buhler93}.  Nonlinear algorithms employ LLL-reduction to search for short vectors in sublattices of $L_{m,d}$.  Theorem~\ref{thm:LLL-properties} suggests that this approach yields polynomials with small coefficients whenever the sublattices have small determinants.

Using an approach introduced by Montgomery (see \cite[Section~5]{elkenbracht96} and \cite[Section~2.3.1]{murphy99}), and since applied by several authors \cite{montgomery93,montgomery06,williams10,prest10,koo11}, nonlinear algorithms construct sublattices of $L_{m,d}$ with small determinants from ``small" geometric progressions modulo $N$.  A \emph{geometric progression} (GP) of \emph{length} $\ell$ and \emph{ratio} $r$ modulo $N$, denoted throughout by a vector $[\seq{c}{0}{\ell-1}]$, is an integer sequence with the property that $c_{i}\equiv c_{0}r^{i}\pmod{N}$, for $0\leq i<\ell$.  Central to the construction of lattices for nonlinear algorithms is the observation that
\begin{equation*}
  L_{m,d}=\left\{(\seq{a}{0}{d})\in\Z^{d+1}\mid\sum^{d}_{i=0}a_{i}c_{i}\equiv 0\bmod{N}\right\},
\end{equation*}
for any length $d+1$ GP $[\seq{c}{0}{d}]$ with ratio $m$ modulo $N$, nonzero terms and $\gcd(c_{0},N)=1$.  Given such a GP, nonlinear algorithms consider a sublattice of $L_{m,d}$ contained in the $\Q$-vector space orthogonal to $[\seq{c}{0}{d}]$.  The role of $N$ in the definition of the sublattice is therefore made implicit, resulting in its determinant being dependent on the terms of the GP and not on $N$ itself.  Consequently, a GP with terms that are small when compared to $N$ is expected to give rise to a sublattice of $L_{m,d}$ with small determinant.  More generally, lattices contained in the $\Q$-vector space orthogonal to multiple linearly independent geometric progressions are considered.

There are two main problems that immediately arise from this approach: firstly, establishing a relationship between the size of terms in the geometric progressions and the determinant of the resulting lattices; and secondly, constructing geometric progressions with small terms.  To help address the first problem, some properties of orthogonal lattices are introduced in the next section.  Section~\ref{sec:gp-detail} takes a closer look at nonlinear polynomial generation and, based on the results of Section~\ref{sec:ortho-lattice}, provides criteria for the selection of geometric progressions.  In Section~\ref{sec:gp-exist}, existing solutions to the second problem are reviewed. 

In this paper, big-$O$ estimates may have implied constants depending on the degree parameter $d$, and $N$ denotes a positive integer condemned to factorisation.

\subsection{The orthogonal lattice}\label{sec:ortho-lattice}

For a lattice $\Lambda\subseteq\Z^{n}$, denote by $E_{\Lambda}$ the unique $\Q$-vector subspace of $\Q^{n}$ that is generated by any of its bases.  The dimension of $E_{\Lambda}$ over $\Q$ is equal to the dimension of $\Lambda$.  Let $E^{\bot}_{\Lambda}$ denote the orthogonal complement of $E_{\Lambda}$ with respect to $\left\langle\,\,\,,\,\,\right\rangle$.  The \emph{orthogonal lattice} of $\Lambda$ is defined to be $\Lambda^{\bot}=\Z^{n}\cap E^{\bot}_{\Lambda}$.  A result of Martinet~\cite[Proposition~1.3.4]{martinet03} implies that $\dim\Lambda^{\bot}=\dim E^{\bot}_{\Lambda}$ if and only if $\dim(\Z^{n})^{\times}\cap E^{\bot\bot}_{\Lambda}=\dim E_{\Lambda}$.  The latter holds since $(\Z^{n})^{\times}\cap E^{\bot\bot}_{\Lambda}=\Z^{n}\cap E_{\Lambda}$ is a lattice (see \cite[Proposition~1.1.3]{martinet03}) which contains $\Lambda$.  Hence, $\dim\Lambda+\dim\Lambda^{\bot}=n$.

For a lattice $\Lambda\subseteq\Z^{n}$, denote by $\overline{\Lambda}$ the lattice $\Z^{n}\cap E_{\Lambda}$.  Nguyen and Stern~\cite[p.~200]{nguyen97} show that $\det\Lambda=\left[\,\overline{\Lambda}:\Lambda\,\right]\cdot\det\Lambda^{\bot}$.  A lattice $\Lambda\subseteq\Z^{n}$ is called \emph{primitive} if $\overline{\Lambda}=\Lambda$.  A $k$-dimensional lattice $\Lambda\subseteq\Z^{n}$ with basis matrix $B$ is primitive if and only if the greatest common divisor of all $k\times k$ minors of $B$ is $1$ (see \cite[Corollary 4.1c]{schrijver}).  The following lemma determines the index $\left[\,\overline{\Lambda}:\Lambda\,\right]$ in general:
\begin{lemma}\label{lem:index-incomplete-lattice}  Let $\Lambda\subseteq\Z^{n}$ be a $k$-dimensional lattice with basis matrix $B$.  If $\Omega$ is the greatest common divisor of all $k\times k$ minors of $B$, then $\left[\,\overline{\Lambda}:\Lambda\,\right]=\Omega$.
\end{lemma}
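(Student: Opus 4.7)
The plan is to reduce to the primitive case already cited from Schrijver.

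First, I would observe that $\overline{\Lambda}=\Z^{n}\cap E_{\Lambda}$ is itself a $k$-dimensional sublattice of $\Z^{n}$, with $E_{\overline{\Lambda}}=E_{\Lambda}$, and that by definition it is primitive. Fix any basis matrix $B'$ of $\overline{\Lambda}$. The inclusion $\Lambda\subseteq\overline{\Lambda}$ expresses each row of $B$ as an integer combination of the rows of $B'$, yielding an integer $k\times k$ matrix $M$ with $B=MB'$. Since $\Lambda$ has the same rank as $\overline{\Lambda}$, the matrix $M$ is nonsingular, and a standard change-of-basis argument identifies $|\det M|$ with $[\,\overline{\Lambda}:\Lambda\,]$.

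Next, I would relate the $k\times k$ minors of $B$ to those of $B'$. For any size-$k$ subset $I$ of column indices, the $k\times k$ submatrix of $B$ obtained by restricting to the columns in $I$ factors as $M$ times the corresponding submatrix of $B'$; taking determinants gives $\det(B_{I})=\det(M)\cdot\det(B'_{I})$. Hence every $k\times k$ minor of $B$ is $\det(M)$ times the corresponding minor of $B'$, so $\Omega=|\det M|\cdot\Omega'$, where $\Omega'$ denotes the gcd of all $k\times k$ minors of $B'$. Since $\overline{\Lambda}$ is primitive, the cited corollary of Schrijver gives $\Omega'=1$, and therefore $\Omega=|\det M|=[\,\overline{\Lambda}:\Lambda\,]$.

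There is no real obstacle here beyond routine lattice bookkeeping: the only substantive input is the primitivity characterisation already in hand. The one point worth checking carefully is the factorisation $B=MB'$ with $M$ integer and $|\det M|=[\,\overline{\Lambda}:\Lambda\,]$, both of which follow immediately from $B'$ being a basis of $\overline{\Lambda}$ and $\Lambda$ being a full-rank sublattice of $\overline{\Lambda}$.
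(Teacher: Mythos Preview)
Your argument is correct and follows essentially the same route as the paper's own proof: write $B=MB'$ for a basis matrix $B'$ of $\overline{\Lambda}$ and an integer $k\times k$ matrix $M$ with $|\det M|=[\,\overline{\Lambda}:\Lambda\,]$, compare the $k\times k$ minors of $B$ and $B'$ via $\det(B_{I})=\det(M)\det(B'_{I})$, and then invoke primitivity of $\overline{\Lambda}$ (via the Schrijver criterion) to conclude $\Omega'=1$. The only cosmetic difference is notation ($M$ and $B'$ in place of the paper's $U$ and $\overline{B}$).
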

\begin{proof}  Let $\overline{B}$ be a basis matrix of $\overline{\Lambda}$.  The lattice $\Lambda$ is a full-rank sublattice of $\overline{\Lambda}$, thus there exists a $k\times k$ integer matrix $U$ such that $|\det U|=\left[\,\overline{\Lambda}:\Lambda\,\right]$ and $B=U\cdot\overline{B}$.  Hence, the lemma will follow by showing that $\Omega=|\det U|$.  

For indices $1\leq i_{1}<\ldots<i_{k}\leq n$, let $B_{i_{1},\ldots,i_{k}}$ denote the $k\times k$ submatrix $B$ formed by columns $i_{1},\ldots,i_{k}$.  Similarly, let $\overline{B}_{i_{1},\ldots,i_{k}}$ denote the submatrix of $\overline{B}$ formed by distinct columns $i_{1},\ldots,i_{k}$.  Then $B_{i_{1},\ldots,i_{k}}=U\cdot\overline{B}_{i_{1},\ldots,i_{k}}$, for all $1\leq i_{1}<\ldots<i_{k}\leq n$.  Therefore, $\Omega=|\det U|\cdot\overline{\Omega}$, where $\overline{\Omega}$ is the greatest common divisor of all $k\times k$ minors of $\overline{B}$.  However, $\overline{\Omega}=1$ as the lattice $\overline{\Lambda}$ is primitive.
\end{proof}

\subsubsection{The determinant under transformation}

For a $k$-dimensional lattice $\Lambda\subseteq\Z^{n}$ and $S\in\mathrm{GL}_{n}(\R)$, define $\Lambda_{S}=\{\vec{x}\cdot S\mid\vec{x}\in\Lambda\}$.  Given a basis $(\seq{\vec{b}}{1}{k})$ of $\Lambda$, define $(\seq{\vec{b}}{1}{k})_{S}=(\vec{b}_{1}S,\ldots,\vec{b}_{k}S)$.  Then $\Lambda_{S}$ is a $k$-dimensional lattice in $\R^{n}$ with basis $(\seq{\vec{b}}{1}{k})_{S}$.
\begin{lemma}\label{lem:det-scaled-ortho} Let $\Lambda$ be a lattice in $\Z^{n}$ and $S\in\mathrm{GL}_{n}(\R)$.  Then
\begin{equation*}
  \det\Lambda^{\bot}_{S}=\left|\det S\right|\cdot\det\overline{\Lambda}_{S^{-t}},
\end{equation*}
where $S^{-t}=(S^{-1})^{t}$ denotes the inverse transpose of $S$.
\end{lemma}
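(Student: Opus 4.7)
The plan is to reduce the claim to a matrix identity involving $G=SS^{t}$ and to invoke the Nguyen--Stern relation $\det\Lambda=[\,\overline{\Lambda}:\Lambda\,]\det\Lambda^{\bot}$ recalled earlier in the subsection.

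First, fix a $k\times n$ basis matrix $\overline{B}$ of $\overline{\Lambda}$ (where $k=\dim\Lambda$) and an $(n-k)\times n$ basis matrix $C$ of $\Lambda^{\bot}$, and form the $n\times n$ matrix $M=\begin{pmatrix}\overline{B}\\ C\end{pmatrix}$. Since $E_{\overline{\Lambda}}$ and $E_{\Lambda^{\bot}}$ are orthogonal complementary subspaces of $\R^{n}$, the matrix $M$ is invertible and $\overline{B}C^{t}=0$, so $MM^{t}$ is block diagonal and $(\det M)^{2}=\det(\overline{B}\overline{B}^{t})\cdot\det(CC^{t})$. Since $\overline{\Lambda}$ is primitive with $(\overline{\Lambda})^{\bot}=\Lambda^{\bot}$, the Nguyen--Stern identity gives $\det\overline{\Lambda}=\det\Lambda^{\bot}$, hence $\det(\overline{B}\overline{B}^{t})=\det(CC^{t})$ and $(\det M)^{2}=\det(\overline{B}\overline{B}^{t})^{2}$.

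Squaring the claimed identity and writing $G=SS^{t}$ reduces the lemma to the matrix identity
\begin{equation*}
  \det(CGC^{t})=\det(G)\cdot\det(\overline{B}\,G^{-1}\overline{B}^{t}).
\end{equation*}
I would prove this by computing the $(1,1)$-block of $H^{-1}=M^{-t}G^{-1}M^{-1}$, where $H=MGM^{t}$, in two ways. The block inversion formula gives $(H^{-1})_{11}=(H_{11}-H_{12}H_{22}^{-1}H_{21})^{-1}$, whose determinant equals $\det H_{22}/\det H = \det(CGC^{t})/\bigl((\det M)^{2}\det G\bigr)$. Alternatively, solving $MM^{-1}=I$ directly (using $\overline{B}C^{t}=0$) shows that $M^{-1}=\bigl(\overline{B}^{t}(\overline{B}\overline{B}^{t})^{-1},\,C^{t}(CC^{t})^{-1}\bigr)$, so $(H^{-1})_{11}=(\overline{B}\overline{B}^{t})^{-1}(\overline{B}G^{-1}\overline{B}^{t})(\overline{B}\overline{B}^{t})^{-1}$, of determinant $\det(\overline{B}\overline{B}^{t})^{-2}\det(\overline{B}G^{-1}\overline{B}^{t})$. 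Equating the two expressions and substituting $(\det M)^{2}=\det(\overline{B}\overline{B}^{t})^{2}$ yields the required identity; the lemma then follows upon taking positive square roots.

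The main obstacle is the clean relation $(\det M)^{2}=\det(\overline{B}\overline{B}^{t})^{2}$: without the Nguyen--Stern equality $\det\overline{\Lambda}=\det\Lambda^{\bot}$, the computation produces the desired formula only up to a spurious factor $\det(CC^{t})/\det(\overline{B}\overline{B}^{t})$. Once that identity is in hand, the remainder is routine block-matrix algebra.
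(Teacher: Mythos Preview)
Your argument is correct but follows a different path from the paper's. The paper extends a basis of $\overline{\Lambda}$ to a basis of $\Z^{n}$, passes to the dual basis (which furnishes a basis of $\Lambda^{\bot}$), and observes that the maps $\vec{v}\mapsto\vec{v}S$ and $\vec{v}\mapsto\vec{v}S^{-t}$ send a pair of dual bases to a pair of dual bases; the identity then drops out of a single application of Martinet's Corollary~1.3.5 to the lattice $\Z^{n}_{S^{-t}}$. You instead stack basis matrices of $\overline{\Lambda}$ and $\Lambda^{\bot}$ into one invertible $n\times n$ matrix $M$, set $G=SS^{t}$, and extract the identity $\det(CGC^{t})=\det G\cdot\det(\overline{B}G^{-1}\overline{B}^{t})$ by computing the $(1,1)$ block of $(MGM^{t})^{-1}$ in two ways via the Schur complement and an explicit formula for $M^{-1}$. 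The Nguyen--Stern equality $\det\overline{\Lambda}=\det\Lambda^{\bot}$ is precisely what collapses $(\det M)^{2}$ to $\det(\overline{B}\,\overline{B}^{t})^{2}$ and makes the cancellation work. Your route is more hands-on linear algebra and avoids citing Martinet's corollary, at the cost of a longer computation; the paper's route is shorter but leans on an external lattice result. Both are sound.
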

\begin{proof}  Fix a basis $(\seq{\vec{b}}{1}{k})$ of $\overline{\Lambda}$.  The lattice $\overline{\Lambda}$ is primitive, thus $(\seq{\vec{b}}{1}{k})$ can be extended to a basis $(\seq{\vec{b}}{1}{n})$ of $\Z^{n}$ \cite[Lemma 2, Chapter 1]{cassels71}.  Since $\Z^{n}$ is unimodular, the dual basis $(\seq{\vec{b}^{\times}}{1}{n})$ of $\R^{n}$ forms a basis of $\Z^{n}$.  The dual basis is characterised by the equalities $\langle\vec{b}^{\times}_{i},\vec{b}_{j}\rangle=\delta_{i,j}$, for $1\leq i,j\leq n$, where $\delta_{i,j}$ is the Kronecker delta.  Therefore, $(\seq{\vec{b}^{\times}}{k+1}{n})$ forms a basis of the orthogonal lattice $\Lambda^{\bot}$.  Hence, $(\seq{\vec{b}}{1}{n})_{S^{-t}}$ forms a basis of $\Z^{n}_{S^{-t}}$, $(\seq{\vec{b}}{1}{k})_{S^{-t}}$ forms a basis of $\overline{\Lambda}_{S^{-t}}$ and $(\seq{\vec{b}^{\times}}{k+1}{n})_{S}$ forms a basis of $\Lambda^{\bot}_{S}$.

For all $1\leq i,j\leq n$,
\begin{equation*}
  \langle\vec{b}^{\times}_{i}S,\vec{b}_{j}S^{-t}\rangle=\vec{b}^{\times}_{i}SS^{-1}{\vec{b}_{j}}^{t}=\langle\vec{b}^{\times}_{i},\vec{b}_{j}\rangle=\delta_{i,j}.
\end{equation*}
Thus $(\seq{\vec{b}^{\times}}{1}{n})_{S}$ is a dual basis of $(\seq{\vec{b}}{1}{n})_{S^{-t}}$.  Therefore, by applying a result of Martinet~\cite[Corollary~1.3.5]{martinet03}, with $E=\R^{n}$ and $F$ equal to the subspace of $\R^{n}$ generated by $(\seq{\vec{b}}{1}{k})_{S^{-t}}$, it follows that
\begin{equation*}
  \left|\det S\right|^{-1} = \det\Z^{n}_{S^{-t}} =\det\left(\overline{\Lambda}_{S^{-t}}\right)\cdot\det\big(\Lambda^{\bot}_{S}\big)^{-1}.\qedhere
\end{equation*}
\end{proof}

Given a basis of a lattice $\Lambda\subseteq\Z^{n}$ and a diagonal matrix $S\in\mathrm{GL}_{n}(\R)$, the following theorem provides a method for computing the determinant of $\Lambda^{\bot}_{S}$:
\begin{theorem}\label{thm:det-ortho-lattice}  Let $\Lambda\subseteq\Z^{n}$ be a $k$-dimensional lattice with basis matrix $B$.  For all indices $1\leq i_{1}<\ldots<i_{k}\leq n$, denote by $B_{i_{1},\ldots,i_{k}}$ the $k\times k$ submatrix of $B$ formed by columns $i_{1},\ldots,i_{k}$.  Suppose $S=\mathrm{diag}(\seq{S}{1}{n})$ for nonzero real numbers $\seq{S}{1}{n}$.  Then
\begin{equation*}
  \det\Lambda^{\bot}_{S}=\frac{\left|S_{1}\cdots S_{n}\right|}{\Omega}\cdot\sqrt{\sum_{1\leq i_{1}<\ldots<i_{k}\leq n}\left(\frac{\det B_{i_{1},\ldots,i_{k}}}{S_{i_{1}}\cdots S_{i_{k}}}\right)^{2}},
\end{equation*}
where $\Omega$ is the greatest common divisor of all $k\times k$ minors of $B$.  
\end{theorem}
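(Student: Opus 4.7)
The plan is to combine Lemma~\ref{lem:det-scaled-ortho} (which relates the determinant of $\Lambda^{\bot}_{S}$ to a scaled copy of the primitive hull $\overline{\Lambda}$) with the Cauchy--Binet formula (to evaluate the determinant of the scaled lattice), and then translate minors of a basis of $\overline{\Lambda}$ into minors of $B$ via Lemma~\ref{lem:index-incomplete-lattice}.

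First I would apply Lemma~\ref{lem:det-scaled-ortho}. Since $S=\mathrm{diag}(S_{1},\ldots,S_{n})$ is diagonal with nonzero entries, $S^{-t}=S^{-1}=\mathrm{diag}(1/S_{1},\ldots,1/S_{n})$ and $|\det S|=|S_{1}\cdots S_{n}|$, so
\begin{equation*}
  \det\Lambda^{\bot}_{S}=|S_{1}\cdots S_{n}|\cdot\det\overline{\Lambda}_{S^{-1}}.
\end{equation*}
This reduces the problem to computing $\det\overline{\Lambda}_{S^{-1}}$, which is a $k$-dimensional real lattice in $\R^{n}$.

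Next I would fix a basis matrix $\overline{B}$ of $\overline{\Lambda}$; then $\overline{B}S^{-1}$ is a basis matrix of $\overline{\Lambda}_{S^{-1}}$, and the determinant is computed from the Gram matrix as
\begin{equation*}
  \det\overline{\Lambda}_{S^{-1}}=\sqrt{\det\bigl(\overline{B}S^{-1}(\overline{B}S^{-1})^{t}\bigr)}.
\end{equation*}
The Cauchy--Binet formula applied to the $k\times n$ matrix $M=\overline{B}S^{-1}$ gives $\det(MM^{t})=\sum_{I}(\det M_{I})^{2}$, where the sum ranges over all $k$-subsets $I=\{i_{1}<\ldots<i_{k}\}$ of $\{1,\ldots,n\}$. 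Because $S^{-1}$ is diagonal, the column of $M$ indexed by $i_{l}$ equals the corresponding column of $\overline{B}$ divided by $S_{i_{l}}$, so $\det M_{i_{1},\ldots,i_{k}}=\det\overline{B}_{i_{1},\ldots,i_{k}}/(S_{i_{1}}\cdots S_{i_{k}})$.

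Finally I would convert minors of $\overline{B}$ into minors of $B$. By Lemma~\ref{lem:index-incomplete-lattice} and its proof there exists an integer matrix $U$ with $B=U\overline{B}$ and $|\det U|=\Omega$; hence $\det B_{i_{1},\ldots,i_{k}}=\det U\cdot\det\overline{B}_{i_{1},\ldots,i_{k}}$ for every index set, and squaring replaces $(\det U)^{2}$ by $\Omega^{2}$. Pulling the factor $1/\Omega$ out of the square root and combining with the leading $|S_{1}\cdots S_{n}|$ from Lemma~\ref{lem:det-scaled-ortho} yields the claimed formula. There is no real obstacle here beyond ensuring that Cauchy--Binet is invoked on the correct orientation of matrix and that the index relating $B$ to $\overline{B}$ is accounted for exactly once; the rest is bookkeeping supplied by the earlier lemmas.
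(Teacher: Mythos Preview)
Your proposal is correct and uses the same ingredients as the paper: Lemma~\ref{lem:det-scaled-ortho}, the Cauchy--Binet formula, and Lemma~\ref{lem:index-incomplete-lattice}. The only cosmetic difference is the order in which you apply the index relation: the paper first passes from $\overline{\Lambda}_{S^{-1}}$ to $\Lambda_{S^{-1}}$ via $\det\overline{\Lambda}_{S^{-1}}=\Omega^{-1}\det\Lambda_{S^{-1}}$ (using invariance of the index under scaling) and then applies Cauchy--Binet directly to $BS^{-1}$, whereas you apply Cauchy--Binet to $\overline{B}S^{-1}$ and convert minors of $\overline{B}$ to minors of $B$ afterward via $B=U\overline{B}$; the two routes are equivalent.
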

\begin{proof}  The index of $\Lambda$ in $\overline{\Lambda}$ is invariant under scaling by the matrix $S^{-1}$, i.e., $\left[\,\overline{\Lambda}_{S^{-1}}:\Lambda_{S^{-1}}\right]=\left[\,\overline{\Lambda}:\Lambda\,\right]$.  Therefore, Lemma~\ref{lem:index-incomplete-lattice} and Lemma~\ref{lem:det-scaled-ortho} imply that
\begin{equation*}
  \det\Lambda^{\bot}_{S}=\left|\det S\right|\cdot\det\overline{\Lambda}_{S^{-1}}=\left|S_{1}\cdots S_{n}\right|\cdot\Omega^{-1}\cdot\det\Lambda_{S^{-1}}.
\end{equation*}
The matrix $P=BS^{-1}$ forms a basis matrix of $\Lambda_{S^{-1}}$.  Therefore, by using the Cauchy--Binet formula (see \cite[p.~86]{aitken56}) to compute $\det PP^{t}$, it follows that
\begin{equation*}
  \det\Lambda_{S^{-1}}=\sqrt{\sum_{1\leq i_{1}<\ldots<i_{k}\leq n}\det\left(P_{i_{1},\ldots,i_{k}}\right)^{2}},
\end{equation*}
where $P_{i_{1},\ldots,i_{k}}=B_{i_{1},\ldots,i_{k}}\cdot\mathrm{diag}(S_{i_{1}},\ldots,S_{i_{k}})^{-1}$ is the $k\times k$ submatrix of $P$ formed by columns $i_{1},\ldots,i_{k}$, for all $1\leq i_{1}<\ldots<i_{k}\leq n$.
\end{proof}

\subsubsection{Computing a basis of the orthogonal lattice}\label{sec:ortho-lattice-basis}
Let $\Lambda$ be a $k$-dimensional lattice in $\Z^{n}$ and $B=(b_{i,j})_{1\leq i\leq k,1\leq j\leq n}$ be one of its basis matrices.  A basis of the orthogonal lattice $\Lambda^{\bot}$ is found by using Algorithm~2.4.10 or Algorithm~2.7.2 of Cohen~\cite{cohen93} to compute a basis of the integer kernel of $B$.  The former algorithm is based on Hermite normal form computation (see \cite[Section~2.4.2]{cohen93}) and the latter algorithm on the MLLL algorithm of Pohst~\cite{pohst87}.  In practice, the MLLL based algorithm is preferable, since it is more likely to avoid large integer arithmetic~\cite[Section~2.4.3]{cohen93}.  Similarly, the LLL HNF algorithm of Havas, Majewski and Matthews~\cite[Section~6]{havas98} can be used.  If $\beta=\max_{j}\norm{(b_{1,j},\ldots,b_{k,j})}^{2}_{2}$, then the algorithm performs $O((n+k)^{4}\log(n\beta))$ operation on integers of size $O(n\log(n\beta))$~\cite{vanderkallen00}.  The algorithm of Nguyen and Stern~\cite[Algorithm~5]{nguyen97} computes an LLL-reduced basis of $\Lambda^{\bot}$.

The algorithm of Nguyen and Stern is readily modified to produce an LLL-reduced basis of $\Lambda^{\bot}_{S}$ for any nonsingular $n\times n$ integer matrix $S$.  Consider the lattice $\Delta$ with basis given by the $n\times(n+k)$ block matrix $D=\begin{pmatrix} S & X B^{t}\end{pmatrix}$, where $X$ is a positive integer.  A vector of norm less than $X$ in $\Delta$ is of the form $\vec{y}D=(\vec{y}S,\vec{0})$ for some $\vec{y}\in\Lambda^{\bot}$.  Thus, an LLL-reduced basis of $\Lambda^{\bot}_{S}$ is obtained by first computing an LLL-reduced basis $(\seq{\vec{x}}{1}{n})$ of $\Delta$ for a sufficiently large value of $X$, and then projecting the basis vectors $\seq{\vec{x}}{1}{n-k}$ onto their first $n$ consecutive entries.  The existence of an LLL-reduced basis for $\Lambda^{\bot}_{S}$ and property~\ref{lll:bi-bnd} of Theorem~\ref{thm:LLL-properties} imply that $\Delta$ contains linearly independent vectors $\seq{\vec{y}}{1}{n-k}$ such that
\begin{equation*}
  \max_{1\leq i\leq n-k}\norm{\vec{y}_{i}}_{2}\leq 2^{\frac{(n-k)(n-k-1)}{4}}\det\Lambda^{\bot}_{S}.
\end{equation*}
Thus, property~\ref{lll:bi-minima-bnd} of Theorem~\ref{thm:LLL-properties} implies that it is sufficient to choose $X$ such that 
\begin{equation*}
  X>2^{\frac{n-1}{2}+\frac{(n-k)(n-k-1)}{4}}\det\Lambda^{\bot}_{S}.
\end{equation*}
Nguyen and Stern observe that theoretical bounds on LLL-reduced bases such as those in Theorem~\ref{thm:LLL-properties} are ``quite pessimistic'' in practical circumstances.  Consequently, a smaller value of $X$ may often be used in practice.

\subsection{Nonlinear polynomial generation}\label{sec:gp-detail}

Nonlinear algorithms first construct $k$, $1\leq k<d$, linearly independent geometric progressions
\begin{equation*}
  \vec{c}_{1}=[c_{1,0},\ldots,c_{1,d}], \vec{c}_{2}=[c_{2,0},\ldots,c_{2,d}],\ldots,\vec{c}_{k}=[c_{k,0},\ldots,c_{k,d}]
\end{equation*}
with ratio $m$ modulo $N$, then use lattice reduction to search for polynomials with coefficient vectors that are short vectors in the lattice $L^{\bot}_{S}$, where $L$ is the $k$-dimensional lattice with basis $(\seq{\vec{c}}{1}{k})$ and $S=\mathrm{diag}(1,s,\ldots,s^{d})$ for some positive skew $s$.  In particular, a pair of polynomials is usually obtained from the first two basis vectors of an LLL-reduced basis of $L^{\bot}_{S}$. The lattice $L^{\bot}$ is required to be a sublattice of $L_{m,d}$, so that each vector in the lattice corresponds to a polynomial that admits $m$ as a root modulo $N$.  This requirement fails in general, but is satisfied whenever at least one $\vec{c}_{i}$ has nonzero terms and $\gcd(c_{i,0},N)=1$.  Furthermore, no $\vec{c}_{i}$ should be a rational GP, otherwise each polynomial that corresponds to a vector in $L^{\bot}$ is reducible.

A vector $\vec{a}=(a_{0},a_{1}s\ldots,a_{d}s^{d})$ in the lattice $L^{\bot}_{S}$ corresponds to an integer polynomial $f=\sum^{d}_{i=0}a_{i}x^{i}$ such that $\norm{f}_{2,s}=s^{-(\deg f)/2}\norm{\vec{a}}_{2}$.  Therefore, if the first two basis vectors of an LLL-reduced basis of $L^{\bot}_{S}$ correspond to coprime degree $d$ polynomials $f_{1},f_{2}\in\Z[x]$, then Corollary~\ref{cor:skewed-resultant-bnd} and Theorem~\ref {thm:LLL-properties} imply that
\begin{equation}\label{eqn:nfs-pol-gp-gen-bnd}
  \frac{N^{\frac{1}{d}}}{|\sin\theta_{s}(f_{1},f_{2})|}\leq\norm{f_{1}}_{2,s}\cdot\norm{f_{2}}_{2,s}\leq\frac{2^{d-k}\,\gamma_{d-k+1}}{s^{d}}\cdot\det(L^{\bot}_{S})^{\frac{2}{d-k+1}},
\end{equation}
where $\theta_{s}$ is defined as in Lemma~\ref{lem:skewed-resultant-bnd} and $\gamma_{n}\leq 1+n/4$, for all $n\geq 1$ \cite[p.~17]{milnor73}.  Furthermore, it follows from the definition of an LLL-reduced basis~\cite[Section~1]{lll} that $|\sin\theta_{s}(f_{1},f_{2})|\geq\sqrt{2/3}$.  Consequently, for the purpose of generating two degree $d$ polynomials, the determinant of $L^{\bot}_{S}$ is considered to have optimal size whenever $s^{-d(d-k+1)/2}\det L^{\bot}_{S}$ is $O(N^{(d-k+1)/2d})$.

Nonlinear algorithms attempt to construct geometric progressions such that the determinant of $L^{\bot}_{S}$ is small by constructing ``small" progressions.  For $\vec{x}\in\R^{d+1}$ and any real number $s>0$, define $\norm{\vec{x}}_{2,s}=s^{-d/2}\norm{\vec{x}\Sigma}_{2}$, where $\Sigma=\mathrm{diag}(1,s,\ldots,s^{d})$.  Then the following theorem shows that the determinant of $L^{\bot}_{S}$ is small for geometric progressions $\seq{\vec{c}}{1}{k}$ such that $\norm{\vec{c}_{1}}_{2,s^{-1}},\ldots,\norm{\vec{c}_{k}}_{2,s^{-1}}$ are small:
\begin{theorem}\label{thm:det-mult-gp}  Let $d$ and $k$ be integers such that $1\leq k\leq d$, and suppose that
\begin{equation*}
  \vec{c}_{1}=[c_{1,0},\ldots,c_{1,d}], \vec{c}_{2}=[c_{2,0},\ldots,c_{2,d}],\ldots,\vec{c}_{k}=[c_{k,0},\ldots,c_{k,d}],
\end{equation*}
are linearly independent geometric progressions, each with ratio $m$ modulo $N$, such that $\gcd(c_{1,0},N)=1$.  Let $L\subseteq\Z^{d+1}$ be the lattice with basis $(\seq{\vec{c}}{1}{k})$ and $S=\mathrm{diag}(1,s,\ldots,s^{d})$ for some positive real number $s$.  Then the lattice $L^{\bot}_{S}$ is $(d-k+1)$-dimensional and
\begin{equation}\label{eqn:det-mult-gp}
  \det L^{\bot}_{S}\leq s^{\frac{d(d-k+1)}{2}}\cdot N^{1-k}\cdot\prod^{k}_{i=1}\norm{\vec{c}_{i}}_{2,s^{-1}}.
\end{equation}
\end{theorem}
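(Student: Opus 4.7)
The dimension statement is immediate from the identity $\dim \Lambda + \dim \Lambda^{\bot} = d+1$ established earlier for $\Lambda \subseteq \Z^{d+1}$, combined with the fact that scaling by the invertible matrix $S$ preserves dimension. The substantive content is the inequality \eqref{eqn:det-mult-gp}.

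My plan is to pass through the lattice $L_{S^{-1}}$ and apply Hadamard's inequality to its scaled basis. By Lemma~\ref{lem:det-scaled-ortho} with $S^{-t} = S^{-1}$, one has $\det L^{\bot}_{S} = |\det S|\cdot \det \overline{L}_{S^{-1}}$, with $|\det S|=s^{d(d+1)/2}$. Because scaling by $S^{-1}$ preserves the index $[\,\overline{L}:L\,]$, which equals the gcd $\Omega$ of all $k\times k$ minors of any basis matrix of $L$ by Lemma~\ref{lem:index-incomplete-lattice}, one obtains $\det \overline{L}_{S^{-1}} = \Omega^{-1}\cdot \det L_{S^{-1}}$. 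Applying Hadamard's inequality to the basis $(\vec{c}_{i}S^{-1})_{1\leq i\leq k}$ of $L_{S^{-1}}$ and invoking the definition of the skewed norm gives
\begin{equation*}
  \det L_{S^{-1}} \leq \prod_{i=1}^{k}\norm{\vec{c}_{i}S^{-1}}_{2} = s^{-kd/2}\prod_{i=1}^{k}\norm{\vec{c}_{i}}_{2,s^{-1}}.
\end{equation*}
Chaining these and simplifying the power of $s$ as $d(d+1)/2 - kd/2 = d(d-k+1)/2$ reduces \eqref{eqn:det-mult-gp} to the divisibility claim $N^{k-1}\mid\Omega$.

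The main obstacle is this last lower bound, and it is where the GP structure is essential. Because $\vec{c}_{j}\equiv c_{j,0}(1,m,\ldots,m^{d})\pmod{N}$ for every $j$ and $c_{1,0}$ is a unit modulo $N$, one can choose integers $a_{2},\ldots,a_{k}$ with $a_{i}c_{1,0}\equiv c_{i,0}\pmod{N}$, which forces $\vec{c}_{i}-a_{i}\vec{c}_{1}\equiv \vec{0}\pmod{N}$; hence $\vec{c}_{i}-a_{i}\vec{c}_{1}=N\vec{d}_{i}$ for some $\vec{d}_{i}\in\Z^{d+1}$. The tuple $(\vec{c}_{1},N\vec{d}_{2},\ldots,N\vec{d}_{k})$ arises from $(\vec{c}_{1},\ldots,\vec{c}_{k})$ by unimodular row operations and is therefore also a basis of $L$. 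Its basis matrix has $k-1$ rows divisible by $N$, so every $k\times k$ minor is divisible by $N^{k-1}$. Since $\Omega$ is an invariant of $L$ (independent of the chosen basis), this yields $\Omega\geq N^{k-1}$ and completes the argument.
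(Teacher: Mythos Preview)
Your proof is correct and follows essentially the same route as the paper: apply Lemma~\ref{lem:det-scaled-ortho} and Lemma~\ref{lem:index-incomplete-lattice} to reduce to bounding $\det L_{S^{-1}}$ by Hadamard, and use the GP relation $\vec{c}_{i}\equiv(c_{i,0}/c_{1,0})\vec{c}_{1}\pmod{N}$ to extract the factor $N^{k-1}$ from $\Omega$. Your use of an integer lift $a_{i}$ of $c_{i,0}c_{1,0}^{-1}\bmod N$ (rather than the rational $c_{i,0}/c_{1,0}$) makes the minor-divisibility step slightly cleaner, but it is the same argument.
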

\begin{proof}  If $k\geq 2$, then $\vec{c}_{i}-(c^{\phantom{1}}_{i,0}/c_{1,0})\vec{c}_{1}\equiv\vec{0}\pmod{N}$, for $2\leq i\leq k$.  Thus, $N^{k-1}$ divides each $k\times k$ minor of the basis matrix $C=(c_{i,j})_{1\leq i\leq k,0\leq j\leq d}$ of $L$.  Hence, Lemma~\ref{lem:index-incomplete-lattice} and Lemma~\ref{lem:det-scaled-ortho} imply that $L^{\bot}_{S}$ is a $(d-k+1)$-dimensional lattice and
\begin{equation}\label{eqn:det-mult-gp-i}
  \det L^{\bot}_{S}\leq\left|\det S\right|\cdot N^{1-k}\cdot\det L_{S^{-1}}=s^{\binom{d+1}{2}}\cdot N^{1-k}\cdot\det L_{S^{-1}}.
\end{equation}
The matrix $CS^{-1}$ is a basis matrix of $L_{S^{-1}}$. Thus, by applying Hadamard's determinant theorem, it follows that
\begin{equation*}
  \det L_{S^{-1}}\leq\prod^{k}_{i=1}\norm{\vec{c}_{i}S^{-1}}_{2}=\prod^{k}_{i=1}s^{-\frac{d}{2}}\cdot\norm{\vec{c}_{i}}_{2,s^{-1}}=s^{-\frac{dk}{2}}\cdot\prod^{k}_{i=1}\norm{\vec{c}_{i}}_{2,s^{-1}}.
\end{equation*}
Combining this inequality with \eqref{eqn:det-mult-gp-i} yields \eqref{eqn:det-mult-gp}.
\end{proof}

To construct multiple geometric progressions with a common ratio modulo $N$, Montgomery~\cite{montgomery93,montgomery06} suggests constructing an initial GP of length $2d-1$ from which $d-1$ geometric progressions of length $d+1$ are obtained by taking subsequences of successive terms.  More generally, Koo, Jo and Kwon~\cite[Section~3]{koo11} suggest constructing an initial GP $\vec{c}=[\seq{c}{0}{\ell-1}]$ of length $\ell$, where $d<\ell<2d$, from which $\ell-d$ geometric progressions of length $d+1$ are obtained:
\begin{equation*}
  \vec{c}_{1}=[c_{0},\ldots,c_{d}],\vec{c}_{2}=[c_{1},\ldots,c_{d+1}],\ldots,\vec{c}_{\ell-d}=[c_{\ell-d-1},\ldots,c_{\ell-1}].
\end{equation*}
If the vectors $\vec{c}_{1},\ldots,\vec{c}_{\ell-d}$ do not form a basis for an $(\ell-d)$-dimensional sublattice of $L_{m,d}$, then $\vec{c}$ is rejected.  For $s>0$, the product of the norms  $\norm{\vec{c}_{i}}_{2,s^{-1}}$ is bounded in terms of the norm of the initial GP:
\begin{equation*}
  \prod^{\ell-d}_{i=1}\norm{\vec{c}_{i}}_{2,s^{-1}}=\prod^{\ell-d}_{i=1}s^{\frac{\ell-d-1}{2}-(i-1)}\cdot\norm{\vec{c}_{i}}_{2,s^{-1}}\leq\norm{\vec{c}}^{\ell-d}_{2,s^{-1}}.
\end{equation*}
Therefore, to generate two degree $d$ polynomials of optimal size, \eqref{eqn:nfs-pol-gp-gen-bnd} and Theorem~\ref{thm:det-mult-gp} suggest constructing an initial GP $\vec{c}$ such that
\begin{equation}\label{eqn:optimal-gp-norm}
  \norm{\vec{c}}_{2,s^{-1}}=O\left(N^{\frac{(2d-1)(\ell-d)-(d-1)}{2d(\ell-d)}}\right),
\end{equation}
for some $s>0$.  For fixed $d$, the weakest size requirements on $\vec{c}$ occur when $\ell=2d-1$, corresponding to Montgomery's algorithm.  For this case, the orthogonal lattice is $2$-dimensional.  Thus, rather than computing an LLL-reduced basis of $L^{\bot}_{S}$, a basis $(\vec{b}_{1},\vec{b}_{2})$ such that $\norm{\vec{b}_{1}}_{2}=\lambda_{1}(L^{\bot}_{S})$ and $\norm{\vec{b}_{2}}_{2}=\lambda_{2}(L^{\bot}_{S})$ is computed in polynomial time with Lagrange's algorithm~\cite{lagrange73} (see also \cite[p.~41]{nguyen10}).  The polynomials $f_{1}$ and $f_{2}$ corresponding to the basis vectors then satisfy $|\sin\theta_{s}(f_{1},f_{2})|\geq\sqrt{3}/2$ (see \cite[p.~41]{nguyen10}).  For large $N$, the problem of how to efficiently constructing geometric progressions that satisfy \eqref{eqn:optimal-gp-norm} remains open for all parameters $(d,\ell)\neq(2,3)$.

Koo, Jo and Kwon~\cite[Section~3]{koo11} observe that a length $\ell$ GP can be used to generate at least one degree $d$ polynomial for all $\ell/2\leq d<\ell$.  Consequently, distinct degree polynomial pairs with a common root modulo $N$ are obtained by varying the parameter $d$.  This observation provides a method of producing polynomial pairs with odd degree sum, allowing nonlinear algorithms to be applied to $N$ of any size.
		    
\subsection{Existing algorithms}\label{sec:gp-exist}

The development of nonlinear generation algorithms has been, for the most part, evolutionary, with successive algorithms building on their predecessors.  The progression of improvements and generalisations in existing nonlinear algorithms is reviewed in this section.

\subsubsection{Montgomery's two quadratics algorithm}

Montgomery's two quadratics algorithm (see \cite[Section~5]{elkenbracht96} and \cite[Section~2.3.1]{murphy99}) uses a length $d+1=3$ GP construction.  The construction begins with the selection of an integer $p\geq 2$, which is usually chosen to be prime, such that $\gcd(p,N)=1$ and $N$ is quadratic residue modulo $p$.  Then one of the values $m\in\Z$ such that $m^{2}\equiv N\pmod{p}$ and $|m-N^{1/2}|\leq p/2$ is chosen.  Finally, the GP is taken to be $[c_{0},c_{1},c_{2}]=[p,m,(m^{2}-N)/p]$, which has ratio $m/p$ modulo $N$.  For a positive integer skew $s$ and an integer $t\equiv c_{2}/c_{1}\pmod{c_{0}}$, LLL-reduction is performed on the row vectors of the matrix
\begin{equation*}\label{eqn:montgomery-M}
  \left(\begin{array}{ccccc}
   c_{1}                      & -c_{0}s & 0    \\
   \frac{c_{1}t-c_{2}}{c_{0}} & -ts     & s^{2}  
\end{array}\right),
\end{equation*}
which is a basis matrix of the lattice $\left([c_{0},c_{1},c_{2}]\Z\right)^{\bot}_{S}$, where $S=\mathrm{diag}(1,s,s^{2})$.

For a given skew $s>0$, choosing $p=O(s^{-1}\sqrt{N})$ guarantees that \eqref{eqn:optimal-gp-norm} holds.  As a result, Montgomery's algorithm is capable of producing polynomials with optimal coefficient size.  However, the restriction to quadratic polynomials means that the algorithm is not suitable for $N$ containing more than 110--120 digits \cite[Section~2.3.1]{murphy99}.  Examples of polynomials generated by Montgomery's two quadratics algorithm are provided by Elkenbracht-Huizing~\cite[Section~10]{elkenbracht96}.

\subsubsection{The Williams and Prest--Zimmermann algorithms}\label{sec:williams-prest-zimmermann}

Williams~\cite[Chapter~4]{williams10} introduces an additional length $3$ GP construction for producing pairs of quadratic polynomials.  Roughly speaking, Williams' construction is the specialisation of Montgomery's construction obtained by setting $p=1$.  Williams additionally provides a length $4$ GP construction for producing pairs of cubic polynomials.  In both of Williams' algorithms, the skew parameter is restricted to $s=1$.  Prest and Zimmermann~\cite{prest10} extended Williams' algorithms to skews $s\neq 1$, leading to a reduction in coefficient norms for the cubic algorithm.  In addition, they generalised their algorithm to arbitrary degrees.

In the algorithms of Williams and Prest--Zimmermann, geometric progressions of length $d+1$ are constructed by first selecting an integer $m$ such that $|m^{d}-N|=O(N^{1-1/d})$.  Then the GP is taken to be
\begin{equation*}
  [c_{0},\ldots,c_{d}]=[1,m,\ldots,m^{d-1},m^{d}-N],
\end{equation*}
which has ratio $m$ modulo $N$.  For a positive integer skew $s$, taken equal to one in Williams' algorithm, LLL-reduction is performed on the row vectors of the matrix
\begin{equation*}\label{eqn:prest-zimmermann-M}
\left(\begin{array}{ccccc}
  -c_{1}  & s      & 0      & \ldots & 0      \\
  -c_{2}  & 0      & s^{2}  & \ldots & 0      \\
  \vdots  & \vdots & \vdots & \ddots & \vdots \\
  -c_{d}  & 0      & 0      & \ldots & s^{d}
\end{array}\right),
\end{equation*}
which is a basis matrix of the lattice $\left([c_{0},\ldots,c_{d}]\Z\right)^{\bot}_{S}$, where $S=\mathrm{diag}(1,s,\ldots,s^{d})$.

Examples of polynomials generated by the Williams and Prest--Zimmermann algorithms are found in \cite[Chapter~5]{williams10} and \cite{prest10}.

\subsubsection{The Koo--Jo--Kwon algorithms}

Koo, Jo and Kwon~\cite[Section~4.1]{koo11} generalise Montgomery's GP construction to arbitrary degrees.  They construct geometric progressions of length $d+1$ by first selecting positive integers $p=O((kN)^{1/d})$ and $k=O(1)$ such that $x^{d}\equiv kN\pmod{p}$ has a nonzero solution.  An integer $m$ satisfying $m^{d}\equiv kN\pmod{p}$ and $|m-\sqrt[d]{kN}|\leq p/2$ is chosen.  Then the GP is taken to be
\begin{equation*}
  [c_{0},\ldots,c_{d}]=\left[p^{d-1},p^{d-2}m,\ldots,m^{d-1},\frac{m^{d}-kN}{p}\right],
\end{equation*}
which has ratio $m/p$ modulo $N$.  Koo, Jo and Kwon use the modified Nguyen--Stern algorithm described in Section~\ref{sec:ortho-lattice-basis} to compute an LLL-reduced basis of the lattice $\left([c_{0},\ldots,c_{d}]\Z\right)^{\bot}_{S}$, with $S=\mathrm{diag}(1,s,\ldots,s^{d})$ for a positive integer skew $s$.

The Koo--Jo--Kwon GP construction reduces to Montgomery's construction for parameters $d=2$, $k=1$, and the constructions of Williams and Prest--Zimmerman for $p=k=1$.  The Koo--Jo--Kwon and Prest--Zimmermann algorithms produce polynomials which satisfy the same theoretical bounds on coefficient norms (see Section~\ref{sec:gp-new-param}).  However, the additional parameters $p$ and $k$ in the Koo--Jo--Kwon construction permit a greater number of geometric progressions to be constructed for any given $N$, which may be leveraged in practical circumstances to find polynomials with smaller coefficients.

By extending their length $d+1$ GP construction, Koo, Jo and Kwon~\cite[Section~4.2]{koo11} obtain a length $d+2$ construction.  The construction begins with the selection of positive integers $p=\Theta((kN)^{1/d})$ and $k=O(1)$ such that $x^{d}\equiv kN\pmod{p^{2}}$ has a nonzero solution $m=\Theta(p)$.  Then the GP is taken to be
\begin{equation*}
  [c_{0},\ldots,c_{d+1}]=\left[p^{d-1},p^{d-2}m,\ldots,m^{d-1},\frac{m^{d}-kN}{p},\frac{m(m^{d}-kN)}{p^{2}}\right],
\end{equation*}
which has ratio $m/p$ modulo $N$.  Koo, Jo and Kwon do not analyse their algorithm for skews $s\neq 1$.  This analysis is undertaken in Section~\ref{sec:gp-new-mod}, where it is shown that the algorithm improves upon previous algorithms for $d\geq 3$, with polynomials of optimal size produced when $d=3$.  However, this improvement is offset in part by the additional complexity of determining suitable parameters $m$, $p$ and $k$.

\section{Length $d+1$ construction revisited}\label{sec:gp-new}

Each of the length $d+1$ GP constructions discussed in Section~\ref{sec:gp-exist} gave rise to geometric progressions $[\seq{c}{0}{d}]$ such that $[\seq{c}{0}{d-1}]$ forms a rational GP.  The following theorem determines all geometric progressions with this property that, in addition, satisfy the properties necessary for polynomial generation:
\begin{theorem}\label{thm:gp-classify}  For $d\geq 2$, $[\seq{c}{0}{d}]$ is a GP modulo $N$ such that
\begin{enumerate}
  \item\label{gp-classify-i} $\seq{c}{0}{d}$ are nonzero,
  \item\label{gp-classify-ii} $\gcd(c_{0},N)=1$,
	\item\label{gp-classify-iii} $[\seq{c}{0}{d-1}]$ is a rational GP, and
	\item\label{gp-classify-iv} $[\seq{c}{0}{d-1},c_{d}]$ is not a rational GP
\end{enumerate}
if and only if there exist nonzero integers $a$, $p$, $m$ and $k$ such that $\gcd(m,p)=1$ and $\gcd(ap,N)=1$, $(am^d-kN)/p$ is a nonzero integer, and
\begin{equation}\label{eqn:gp-new-def}
  [\seq{c}{0}{d}]=\left[ap^{d-1},ap^{d-2}m,\ldots,am^{d-1},\frac{am^{d}-kN}{p}\right].
\end{equation}
\end{theorem}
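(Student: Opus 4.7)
The plan is to prove both implications by direct algebraic verification, with the key insight being that the rational GP structure of the first $d$ terms forces a unique parameterisation up to the denominator of the ratio.

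\emph{Sufficiency.} Given nonzero integers $a$, $p$, $m$, $k$ with $\gcd(m,p)=\gcd(ap,N)=1$ and $c_d=(am^d-kN)/p\in\Z\setminus\{0\}$, first observe that $c_i=ap^{d-1-i}m^i$ for $0\le i\le d-1$ is a rational GP with ratio $m/p$, giving (iii) and non-vanishing of $c_0,\ldots,c_{d-1}$. Condition (ii) is immediate from $c_0=ap^{d-1}$ together with $\gcd(ap,N)=1$; in particular $p$ is invertible modulo $N$. To confirm the full sequence is a GP modulo $N$, I would compute $pc_d=am^d-kN\equiv am^d=pc_{d-1}(m/p)\pmod N$ and cancel $p$. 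For (iv), a rational GP structure of $[c_0,\ldots,c_d]$ would force $c_d/c_{d-1}=c_1/c_0=m/p$, whence $am^d-kN=am^d$ and so $kN=0$, contradicting $k,N\ne 0$.

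\emph{Necessity.} Given a sequence satisfying (i)--(iv), I would write the ratio of the rational GP $[c_0,\ldots,c_{d-1}]$ in lowest terms as $m/p$ with $\gcd(m,p)=1$. Then $c_ip^i=c_0m^i$, and coprimality forces $p^i\mid c_0$ for $0\le i\le d-1$; in particular, $a:=c_0/p^{d-1}\in\Z$ and $c_i=ap^{d-1-i}m^i$. Condition (i) gives $a,p,m\ne 0$, and (ii) combined with $c_0=ap^{d-1}$ gives $\gcd(ap,N)=1$. The GP property modulo $N$ says $c_1\equiv c_0r\pmod N$ for some $r$; combined with the identity $c_0m=c_1p$ and $\gcd(c_0,N)=1$, this yields $m\equiv pr\pmod N$. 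Applying the GP property at the last step gives $pc_d\equiv pc_{d-1}r\equiv c_{d-1}m=am^d\pmod N$, so $c_d=(am^d-kN)/p$ for some integer $k$, which is nonzero by (i). Finally $k\ne 0$, for if $k=0$ then $c_d/c_{d-1}=m/p$ would make $[c_0,\ldots,c_d]$ a rational GP, contradicting (iv).

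The argument is essentially a parameterisation exercise and contains no serious obstacle; the only point requiring care is distinguishing the rational ratio $m/p$ from the ratio $r$ of the GP modulo $N$, and showing these are compatible via $pr\equiv m\pmod N$ using $\gcd(c_0,N)=1$.
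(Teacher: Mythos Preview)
Your proof is correct and follows essentially the same approach as the paper's. The only stylistic difference is in the necessity direction: you introduce the ratio $r$ explicitly and derive $pr\equiv m\pmod N$ before concluding $pc_d\equiv am^d\pmod N$, whereas the paper obtains the same congruence in one stroke via the cross-product identity $c_1c_{d-1}-c_0c_d\equiv 0\pmod N$ (which equals $ap^{d-2}(am^d-pc_d)$), and then uses $\gcd(ap,N)=1$ to strip the factor $ap^{d-2}$. Both routes are elementary and yield the same $k$; the paper's is marginally shorter, while yours makes the relationship between the rational ratio $m/p$ and the modular ratio $r$ more explicit.
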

\begin{proof}Suppose that $d\geq 2$ and $[\seq{c}{0}{d}]$ is a GP modulo $N$ that satisfies properties \eqref{gp-classify-i}--\eqref{gp-classify-iv} of the theorem.  Then properties \eqref{gp-classify-i} and \eqref{gp-classify-iii} imply that there exist nonzero integers $a$, $p$, and $m$ such that $\gcd(m,p)=1$ and $c_{i}=ap^{d-i-1}m^{i}$, for $0\leq i\leq d-1$.  Consequently, $\gcd(ap,N)=1$ as a result of the second property.  As $[\seq{c}{0}{d}]$ is a GP modulo $N$, it follows that $c_{1}c_{d-1}-c_{0}c_{d}\equiv 0\pmod{N}$.  If $c_{1}c_{d-1}-c_{0}c_{d}=0$, then $c_{d}=am^{d}/p$ and $[\seq{c}{0}{d}]$ is a rational GP, violating the fourth property.  Therefore, there exists a nonzero integer $u$ such that
\begin{equation*}
  uN=c_{1}c_{d-1}-c_{0}c_{d}=ap^{d-2}\left(am^{d}-pc_{d}\right).
\end{equation*}
Thus, $am^{d}-pc_{d}=kN$ for some nonzero integer $k$, since $\gcd(ap,N)=1$.  Hence, $(am^{d}-kN)/p$ is a nonzero integer and \eqref{eqn:gp-new-def} holds.  The converse is readily established.
\end{proof}

The GP construction provided by Theorem~\ref{thm:gp-classify} encompasses each of the length $d+1$ constructions discussed in Section~\ref{sec:gp-exist}.  The inclusion of the parameter $a$, which is not present in previous constructions, permits a greater number of geometric progressions to be constructed for any given $N$.  Parameters for the construction may be obtained by the Chinese remainder theorem based methodology of Kleinjung~\cite{kleinjung06}, and Koo, Jo and Kwon~\cite[Section 4.1]{koo11}.  Heuristically, $ax^{d}-kN$ has on average one root modulo each prime, when the average is computed over all primes (see \cite[Chapter~VIII, Section~4]{lang70}).
 Furthermore, roots may be lifted (see \cite[Section~3.5.3]{cohen93}) to higher powers for primes which do not divide
\begin{equation*}
  \res\left(ax^{d}-kN,adx^{d-1}\right)=(ad)^{d}(-kN)^{d-1}.
\end{equation*}
Thus, for any choice of $a$ and $k$, there is an abundant supply of values for $p$ and $m$ that may be used in the construction.

For a GP $\vec{c}$ of the form \eqref{eqn:gp-new-def}, the following lemma provides an efficient method for computing a basis of the lattice $(\vec{c}\Z)^{\bot}$:
\begin{lemma}\label{lem:gp-basis} Let $\vec{c}=[c_{0},\ldots,c_{d}]$ be a GP of the form \eqref{eqn:gp-new-def}, where $d\geq 2$ and $a$, $p$, $m$ and $k$ are nonzero integers such that $\gcd(m,p)=1$, $\gcd(ap,N)=1$ and $(am^d-kN)/p$ is a nonzero integer.  Define $\tilde{a}=a/\gcd(a,c_{d})$ and $\tilde{k}=k/\gcd(a,c_{d})$.  Then for any degree $d$ polynomial $\tilde{f}\in\Z[x]$ with leading coefficient $\tilde{a}$ and $\tilde{f}(m/p)p^{d}=\tilde{k}N$, a vector $(a_{0},\ldots,a_{d})\in\Z^{d+1}$ is orthogonal to $\vec{c}$ if and only if there exist integers $r_{0},\ldots,r_{d-1}$ such that
\begin{equation}\label{eqn:gp-basis}
  \sum^{d}_{i=0}a_{i}x^{i}=r_{d-1}\tilde{f}(x)+(px-m)\cdot\sum^{d-2}_{i=0}r_{i}x^{i}.
\end{equation}
Moreover, such a polynomial $\tilde{f}$ exists and can be computed with Algorithm~\ref{alg:base-m-p}.
\end{lemma}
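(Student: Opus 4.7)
The plan is to translate the orthogonality condition $\vec{a}\cdot\vec{c}=0$, where $\vec{a}=(a_{0},\ldots,a_{d})$ and $\phi(x)=\sum^{d}_{i=0}a_{i}x^{i}$, into a condition on $\phi(m/p)$, and then to recognise \eqref{eqn:gp-basis} as parametrising exactly the integer polynomials that satisfy it.  Multiplying $\vec{a}\cdot\vec{c}$ by $p$ and expanding the $c_{i}$ via \eqref{eqn:gp-new-def} gives
$$p(\vec{a}\cdot\vec{c})=a\,\phi(m/p)p^{d}-a_{d}kN,$$
so $\vec{a}\cdot\vec{c}=0$ if and only if $a\,\phi(m/p)p^{d}=a_{d}kN$.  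Writing $g=\gcd(a,c_{d})$ and dividing $pc_{d}=am^{d}-kN$ by $g$ yields $p\tilde{c}_{d}=\tilde{a}m^{d}-\tilde{k}N$ with $\tilde{c}_{d}=c_{d}/g$; this simultaneously shows (using $\gcd(a,N)=1$) that $g\mid k$, so $\tilde{k}$ is an integer, and reduces the orthogonality condition to $\tilde{a}\,\phi(m/p)p^{d}=a_{d}\tilde{k}N$, which by the defining property of $\tilde{f}$ is the same as $\tilde{a}\,\phi(m/p)=a_{d}\tilde{f}(m/p)$.

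The $(\Leftarrow)$ direction is a direct substitution: if $\phi=r_{d-1}\tilde{f}+(px-m)\sum^{d-2}_{i=0}r_{i}x^{i}$, then the leading coefficient satisfies $a_{d}=r_{d-1}\tilde{a}$ and evaluation at $m/p$ gives $\phi(m/p)=r_{d-1}\tilde{f}(m/p)$, so $\tilde{a}\,\phi(m/p)=a_{d}\tilde{f}(m/p)$ holds trivially.  For the $(\Rightarrow)$ direction, the central point is to extract the divisibility $\tilde{a}\mid a_{d}$.  Substituting $\tilde{k}N=\tilde{a}m^{d}-p\tilde{c}_{d}$ into $\tilde{a}\,\phi(m/p)p^{d}=a_{d}\tilde{k}N$, expanding $\phi(m/p)p^{d}=a_{d}m^{d}+\sum_{i<d}a_{i}m^{i}p^{d-i}$, cancelling the terms $\tilde{a}a_{d}m^{d}$ on both sides, and dividing by $p$, I obtain
$$\tilde{a}\sum^{d-1}_{i=0}a_{i}m^{i}p^{d-1-i}=-a_{d}\tilde{c}_{d}.$$
Since $\gcd(\tilde{a},\tilde{c}_{d})=1$ by construction of $g$, this forces $\tilde{a}\mid a_{d}$, and I set $r_{d-1}=a_{d}/\tilde{a}$.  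The polynomial $\phi(x)-r_{d-1}\tilde{f}(x)\in\Z[x]$ then has degree at most $d-1$ and vanishes at $m/p$, so it is divisible by $px-m$ in $\Q[x]$; since $px-m$ is primitive (from $\gcd(p,m)=1$), Gauss's lemma upgrades the quotient to an element of $\Z[x]$, supplying the integers $r_{0},\ldots,r_{d-2}$.

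For existence of $\tilde{f}$, writing $\tilde{f}(x)=\tilde{a}x^{d}+\sum^{d-1}_{i=0}b_{i}x^{i}$ and again invoking $\tilde{k}N-\tilde{a}m^{d}=-p\tilde{c}_{d}$, the requirement $\tilde{f}(m/p)p^{d}=\tilde{k}N$ reduces to the single linear equation $\sum^{d-1}_{i=0}b_{i}m^{i}p^{d-1-i}=-\tilde{c}_{d}$.  Since $\gcd(m,p)=1$ the coefficients $\{m^{i}p^{d-1-i}\}^{d-1}_{i=0}$ have greatest common divisor $1$, so an integer solution exists; a base-$(m,p)$ expansion that determines $b_{d-1}$ modulo $p$ from $\gcd(m^{d-1},p)=1$, subtracts $b_{d-1}m^{d-1}$, divides by $p$, and recurses will yield Algorithm~\ref{alg:base-m-p}.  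The main technical obstacle is the step $\tilde{a}\mid a_{d}$ in the $(\Rightarrow)$ direction: this is not apparent from $\tilde{a}\mid a_{d}\tilde{k}N$ on its own, and it is precisely the normalisation by $g$ that provides the coprimality $\gcd(\tilde{a},\tilde{c}_{d})=1$ needed to isolate it.
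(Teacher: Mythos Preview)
Your argument is correct and follows essentially the same route as the paper: both translate orthogonality into the vanishing of $\phi(m/p)-(a_{d}/\tilde{a})\tilde{f}(m/p)$, establish $\tilde{a}\mid a_{d}$ from $a\mid\sum_{i<d}a_{i}c_{i}=-a_{d}c_{d}$ together with $\gcd(\tilde{a},c_{d}/g)=1$, and then invoke Gauss' lemma on the primitive factor $px-m$. The only small inaccuracy is your description of Algorithm~\ref{alg:base-m-p}: the paper's algorithm determines $a_{i}$ via a residue modulo $m^{i}$ (choosing $t_{i}\equiv -r_{i}/p\pmod{m^{i}}$), not by solving for $b_{d-1}$ modulo $p$ as you sketch; your recursion also produces a valid $\tilde{f}$, but it is not literally Algorithm~\ref{alg:base-m-p}.
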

\begin{proof}  By construction, $am^{d}-kN=c_{d}p$ and $\gcd(a,N)=1$.  Thus, $\tilde{k}$ is an integer and $\tilde{a}m^{d}-\tilde{k}N\equiv 0\pmod{p}$.  Therefore, by modifying arguments of Kleinjung~\cite[Lemma~2.1]{kleinjung06}, it is shown that applying Algorithm~\ref{alg:base-m-p} (which appears below) with $n=j=d$ and $a_{n}=\tilde{a}$ returns a degree $d$ polynomial $\tilde{f}\in\Z[x]$ with leading coefficient $\tilde{a}$ and $\tilde{f}(m/p)p^{d}=\tilde{k}N$.  Fix such a polynomial $\tilde{f}$ and let $(a_{0},\ldots,a_{d})\in\Z^{d+1}$ be orthogonal to $\vec{c}$.  Then
\begin{equation*}
  \sum^{d}_{i=0}a_{i}\left(\frac{m}{p}\right)^{i}-\frac{a_{d}}{\tilde{a}}\tilde{f}\left(\frac{m}{p}\right)%
  =\frac{1}{p^{d}}\left(\frac{p}{a}\sum^{d}_{i=0}a_{i}c_{i}+\frac{a_{d}k}{a}N-\frac{a_{d}\tilde{k}}{\tilde{a}}N\right)=0,
\end{equation*}
where $a_{d}/\tilde{a}$ is an integer, since $a$ divides each term of the sum $\sum^{d-1}_{i=0}a_{i}c_{i}=-a_{d}c_{d}$ and $c_{d}\neq 0$.  Therefore, the assumption that $\gcd(m,p)=1$ and Gauss' lemma imply that there exists a polynomial $r\in\Z[x]$ such that
\begin{equation*}
  \sum^{d}_{i=0}a_{i}x^{i}-\frac{a_{d}}{\tilde{a}}\tilde{f}(x)=r(x)\cdot(px-m).
\end{equation*}
Moreover, since the polynomial on the left-hand side has degree less than $d$, the degree of $r$ is at most $d-2$.  Thus, if a vector $(a_{0},\ldots,a_{d})\in\Z^{d+1}$ is orthogonal to $\vec{c}$, then there exist integers $r_{0},\ldots,r_{d-1}$ such that \eqref{eqn:gp-basis} holds.  Conversely, suppose that $(a_{0},\ldots,a_{d})\in\Z^{d+1}$ and \eqref{eqn:gp-basis} holds for integers $r_{0},\ldots,r_{d-1}$.  Then
\begin{equation*}
  \sum^{d}_{i=0}a_{i}c_{i}=\frac{r_{d-1}}{p}\left(a\tilde{f}\left(\frac{m}{p}\right)p^{d}-\tilde{a}kN\right)+\left(p\frac{m}{p}-m\right)\cdot\sum^{d-2}_{i=0}r_{i}c_{i}=0,
\end{equation*}
since $\tilde{f}(m/p)p^{d}=\tilde{a}{k}N/a$.  Thus, the vector $(a_{0},\ldots,a_{d})$ is orthogonal to $\vec{c}$.
\end{proof}

The following algorithm, which is used to prove the existence of the polynomial $\tilde{f}$ in Lemma~\ref{lem:gp-basis}, is based on the CADO-NFS~\cite{CADO} implementation of a number field sieve polynomial construction of Kleinjung~\cite[Lemma~2.1]{kleinjung06}:
\begin{algorithm}\label{alg:base-m-p}\

{\noindent {\scshape Input:} Nonzero integers $m$, $p$, $\tilde{k}$ and $N$ such that $\gcd(m,p)=1$; and integers $a_{j},\ldots,a_{n}$ such that $1\leq j\leq n$ and $p^{n-j+1}$ divides $\tilde{k}N-\sum^{n}_{i=j}a_{i}m^{i}p^{n-i}$.}

{\noindent {\scshape Output:} An integer polynomial $\tilde{f}=\sum^{n}_{i=0}a_{i}x^{i}$ such that $\tilde{f}(m/p)p^{n}=\tilde{k}N$.}
\begin{list}{\theenumi.}{\usecounter{enumi}  \labelsep=1ex \labelwidth=2ex \leftmargin=12pt \itemindent=0ex}
  \item If $j=n$, set $r_{j}=\tilde{k}N$; otherwise, set
  \begin{equation*}
    r_{j}=\frac{\tilde{k}N-\sum^{n}_{i=j+1}a_{i}m^{i}p^{n-i}}{p^{n-j}}.
  \end{equation*}
  \item For $i=j-1,\ldots,0$, compute
    \begin{equation*}
      r_{i}=\frac{r_{i+1}-a_{i+1}m^{i+1}}{p}\quad\text{and}\quad a_{i}=\frac{r_{i}+t_{i}p}{m^{i}},
    \end{equation*}
   where $t_{i}\in[-m^{i}/2,m^{i}/2)\cap\Z$ such that $t_{i}\equiv-r_{i}/p\pmod{m^{i}}$.
  \item Compute and return the polynomial $\tilde{f}=\sum^{n}_{i=0}a_{i}x^{i}$.
\end{list}
\end{algorithm}

Suppose that $\vec{c}=[c_{0},\ldots,c_{d}]$ is a GP of the form \eqref{eqn:gp-new-def}, and set $\tilde{a}=a/\gcd(a,c_{d})$ and $\tilde{k}=k/\gcd(a,c_{d})$.  Then $\gcd(\tilde{a},\tilde{k})=\gcd(\tilde{k},p)$, since $am^{d}-kN=c_{d}p$ and $\gcd(m,p)=1$.  Thus, if $g=\gcd(\tilde{a},\tilde{k})$ is not equal to $1$, then the vector $\vec{c}^{*}=[c_{0}/g^{d},c_{1}/g^{d-1},\ldots,c_{d}]$ has integer entries, is a GP with ratio $gm/p$ modulo $N$, and has smaller terms than $\vec{c}$:  for all $s>0$,
\begin{equation*}
  \norm{\vec{c}^{*}}_{2,s^{-1}}<\norm{\vec{c}}_{2,s^{-1}}\quad\text{and}\quad\norm{\vec{c}^{*}}_{2,(sg)^{-1}}=g^{-\frac{d}{2}}\norm{\vec{c}}_{2,s^{-1}}.
\end{equation*}
From Lemma~\ref{lem:gp-basis} it follows that a degree $d$ polynomial $f$ generated using $\vec{c}$ has leading coefficient divisible by $\tilde{a}$, and $\tilde{k}N$ divides $f(m/p)p^{d}$.  Thus, the selection of the parameters $a$, $p$ and $k$ may be used to positively influence root properties.  For example, if $a=1$, then Koo, Jo and Kwon~\cite[Remark~5]{koo11} suggest using values of $k$ that contain a product of small primes, ensuring there is a root modulo each prime in the product.  However, care must be taken to ensure that the selection of $a$, $p$ and $k$ does not preclude the existence of polynomial pairs with small coefficients:
\begin{corollary}\label{cor:gp-basis}  With notation as in Lemma~\ref{lem:gp-basis}, if coprime degree $d$ polynomials $f_{1},f_{2}\in\Z[x]$ have coefficient vectors that belong to the lattice $(\vec{c}\Z)^{\bot}$, then
\begin{equation}\label{eqn:cor-gp-basis}
	\big|\tilde{a}\tilde{k}N\big|^{1/d}\leq\left|\sin\theta_{s}(f_{1},f_{2})\right|\cdot\norm{f_{1}}_{2,s}\norm{f_{2}}_{2,s},\quad\text{for all $s>0$}.
\end{equation}	
\end{corollary}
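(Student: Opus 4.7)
The plan is to apply Lemma~\ref{lem:skewed-resultant-bnd} with $m = n = d$, which gives $|\res(f_1, f_2)| \le |\sin\theta_s(f_1, f_2)|^d\norm{f_1}_{2,s}^d\norm{f_2}_{2,s}^d$ for all $s > 0$, and combine with the lower bound $|\res(f_1, f_2)| \ge |\tilde{a}\tilde{k}N|$. Taking $d$-th roots of the resulting chain then produces~\eqref{eqn:cor-gp-basis}, noting that $\res(f_1, f_2)$ is a nonzero integer by the coprimality of $f_1$ and $f_2$.

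For the lower bound, I would use Lemma~\ref{lem:gp-basis} to write $f_i(x) = r_{i,d-1}\tilde{f}(x) + (px-m)h_i(x)$ with $h_i \in \Z[x]$ of degree at most $d-2$. Since $\deg f_i = d$ while $\deg((px-m)h_i) \le d-1$, the leading coefficient
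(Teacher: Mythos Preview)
Your reduction is exactly the paper's: combine Lemma~\ref{lem:skewed-resultant-bnd} (with $m=n=d$) with the divisibility $\tilde{a}\tilde{k}N\mid\res(f_{1},f_{2})$, then take $d$-th roots. Using Lemma~\ref{lem:gp-basis} to write $f_i = r_{i,d-1}\tilde{f} + (px-m)h_i$ and read off that the leading coefficient of $f_i$ equals $r_{i,d-1}\tilde{a}$ is also the right first move.

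What is missing --- your proposal breaks off mid-sentence --- is the proof that the \emph{product} $\tilde{a}\tilde{k}N$ divides $\res(f_1,f_2)$, and this is where the real work lies. It is easy to get $\tilde{a}\mid\res(f_1,f_2)$ (every entry in the first column of $\mathrm{Syl}(f_1,f_2)$ is a leading coefficient, hence divisible by $\tilde{a}$), and a Cramer-type argument together with $\gcd(m,p)=1$ also yields $\tilde{k}N\mid\res(f_1,f_2)$. But these two divisibilities do not simply multiply: the paper observes just after Algorithm~\ref{alg:base-m-p} that $\gcd(\tilde{a},\tilde{k})=\gcd(\tilde{k},p)$, which can exceed~$1$, so $\tilde{a}$ and $\tilde{k}N$ need not be coprime.

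The paper sidesteps this by a generic argument. It replaces $\tilde{f}$ by $\tilde{g}=\sum_{j}y_j x^j$ with indeterminate coefficients over $\mathbb{A}=\Z[y_0,\ldots,y_d]$, sets $g_i = f_i + r_{i,d-1}(\tilde{g}-\tilde{f})$, and uses Hilbert's Nullstellensatz to show that the irreducible linear form $\tilde{g}(m/p)p^d\in\mathbb{A}$ divides $\res(g_1,g_2)$. In $\mathbb{A}$ the indeterminate $y_d$ is genuinely coprime to $\tilde{g}(m/p)p^d$ and also divides $\res(g_1,g_2)$ via the first column of the Sylvester matrix, so the product $y_d\cdot\tilde{g}(m/p)p^d$ divides $\res(g_1,g_2)$ there; specializing $y_j$ to the coefficients of $\tilde{f}$ then gives $\tilde{a}\tilde{k}N\mid\res(f_1,f_2)$. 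Your sketch stops well before this point, and I do not see a way to complete it along purely elementary lines without confronting the coprimality obstruction above.
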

\begin{proof}  Suppose that polynomials $f_{1}$ and $f_{2}$ satisfy the conditions of the corollary.  Then Lemma~\ref{cor:gp-basis} implies that there exist integers $\seq{r}{1,0}{1,d-1},\seq{r}{2,0}{2,d-1}$ and a degree $d$ polynomial $\tilde{f}\in\Z[x]$ with leading coefficient $\tilde{a}$ and $\tilde{f}(m/p)p^{d}=\tilde{k}N$ such that $f_{i}=r_{i,d-1}\tilde{f}+(px-m)\cdot\sum^{d-2}_{j=0}r_{i,j}x^{j}$, for $i=1,2$.  Let $\mathbb{A}=\Z[\seq{y}{0}{d}]$ for algebraically independent indeterminates $\seq{y}{0}{d}$ and define the following polynomials in $\mathbb{A}[x]$: $\tilde{g}=\sum^{d}_{j=0}y_{j}x^{j}$; and $g_{i}=f_{i}+r_{i,d-1}(\tilde{g}-\tilde{f})$, for $i=1,2$.

Let $\varphi:\mathbb{A}\rightarrow\C$ be a ring homomorphism such that $\varphi(\tilde{g}(m/p)p^{d})=0$ and $\tilde{\varphi}:\mathbb{A}[x]\rightarrow\C[x]$ be the natural extension of $\varphi$.  If $\tilde{\varphi}(g_{1})=0$ or $\tilde{\varphi}(g_{2})=0$, then $\varphi(\res(g_{1},g_{2}))=0$, since $\res(g_{1},g_{2})$ is homogeneous of degree $d$ in the coefficients of $g_{i}$, for $i=1,2$.  If $\tilde{\varphi}(g_{1})$ and $\tilde{\varphi}(g_{2})$ are nonzero, then they are non-constant and have common root $m/p$.  Thus, $\res\left(\tilde{\varphi}(g_{1}),\tilde{\varphi}(g_{2})\right)=0$, and by replacing each entry of $\mathrm{Syl}(g_{1},g_{2})$ by its image under $\varphi$ and permuting the rows of the resulting matrix so that $\mathrm{Syl}\left(\tilde{\varphi}(g_{1}),\tilde{\varphi}(g_{2})\right)$ appears in the lower right corner, it follows that $\varphi\left(\res(g_{1},g_{2})\right)=0$.  Therefore, for every ring homomorphism $\varphi:\mathbb{A}\rightarrow\C$, if $\varphi(\tilde{g}(m/p)p^{d})=0$ then $\varphi\left(\res(g_{1},g_{2})\right)=0$.  The polynomial $\tilde{g}(m/p)p^{d}\in\mathbb{A}$ is irreducible, since it is linear and $\gcd(m,p)=1$.  Thus, Hilbert's Nullstellensatz (see \cite[p.~380]{lang02}) and Gauss' lemma imply that $\tilde{g}(m/p)p^{d}$ divides $\res(g_{1},g_{2})$ in $\mathbb{A}$.  Moreover, since $y_{d}$ is coprime to $\tilde{g}(m/p)p^{d}$ and divides each entry in the first column of $\mathrm{Syl}(g_{1},g_{2})$, it follows that $y_{d}\tilde{g}(m/p)p^{d}$ divides $\res(g_{1},g_{2})$ in $\mathbb{A}$.

Let $\sigma:\mathbb{A}\rightarrow\Z$ be the ring homomorphism such that $\tilde{f}=\sum^{d}_{j=0}\sigma(y_{j})x^{j}$ and $\tilde{\sigma}:\mathbb{A}[x]\rightarrow\Z[x]$ be the natural extension of $\sigma$.  Then $\tilde{\sigma}(g_{1})=f_{1}$ and $\tilde{\sigma}(g_{2})=f_{2}$.  Consequently, replacing each entry of $\mathrm{Syl}(g_{1},g_{2})$ by its image under $\sigma$ shows that
\begin{equation*}
  \sigma(\res(g_{1},g_{2}))=\res\left(\tilde{\sigma}(g_{1}),\tilde{\sigma}(g_{2})\right)=\res(f_{1},f_{2}).
\end{equation*}
Moreover, $\sigma(y_{d}\tilde{g}(m/p)p^{d})=\tilde{a}\tilde{f}(m/p)p^{d}$.  Therefore, $\tilde{a}\tilde{k}N$ divides $\res(f_{1},f_{2})$, which is nonzero since $f_{1}$ and $f_{2}$ are coprime.  Hence, $|\tilde{a}\tilde{k}N|\leq|\res(f_{1},f_{2})|$ and the proof is completed by applying Lemma~\ref{lem:skewed-resultant-bnd}.
\end{proof}

Theorem~\ref{thm:gp-classify} and Lemma~\ref{lem:gp-basis} suggest the following algorithm for nonlinear polynomial generation:
\begin{algorithm}\label{alg:nfs-pol-gp}\

{\noindent {\scshape Input:} An integer $d\geq 2$; nonzero integers $a$, $p$, $m$, $k$ and $N$ such that $\gcd(m,p)=1$, $\gcd(ap,N)=1$ and $(am^{d}-kN)/p$ is a nonzero integer; and a positive integer $s$.}

{\noindent {\scshape Output:} Integer polynomials $f_{1}$ and $f_{2}$ of degree at most $d$ with common root $m/p$ modulo $N$.}
\begin{list}{\theenumi.}{\usecounter{enumi}  \labelsep=1ex \labelwidth=2ex \leftmargin=12pt \itemindent=0ex}
  \item\label{step:alg:nfs-pol-gp-i} Compute $g=\gcd(a,(am^{d}-kN)/p)$ and apply Algorithm~\ref{alg:base-m-p} with $n=j=d$, $a_{n}=a/g$ and $\tilde{k}=k/g$ to obtain an integer polynomial $\tilde{f}=\sum^{d}_{i=0}a_{i}x^{i}$ such that $\tilde{f}(m/p)p^{d}=\tilde{k}N$.
  \item\label{step:alg:nfs-pol-gp-iii} Compute an LLL-reduced basis $(\seq{\vec{b}}{1}{d})$ of the lattice with $d\times(d+1)$ basis matrix $BS$, where $S=\mathrm{diag}(1,s,\ldots,s^{d})$ and
     \begin{equation*}
     B=\begin{pmatrix}
        a_{0} & \ldots & \ldots & a_{d-1} &    a_{d} \\
             0 & \ldots &     -m &       p &        0 \\
        \vdots & \revdots & \revdots &  \vdots &   \vdots \\
               -m &      p & \ldots &       0 &        0
     \end{pmatrix}.
    \end{equation*}
  \item\label{step:alg:nfs-pol-gp-iv} Let $\vec{b}_{i}=(b_{i,0},\ldots,b_{i,d})\cdot S$, for $i=1,2$.  Compute and return the polynomials $f_{1}=\sum^{d}_{j=0}b_{1,j}x^{j}$ and $f_{2}=\sum^{d}_{j=0}b_{2,j}x^{j}$.
\end{list}
\end{algorithm}

Parameter selection for Algorithm~\ref{alg:nfs-pol-gp} is considered in the next section.  To guide the selection of parameters, an upper bound on the size of the polynomials returned by the algorithm, expressed as a function of the algorithm's parameters, is now established:
\begin{theorem}\label{thm:nfs-pol-gp} For inputs $d$, $a$, $p$, $m$, $k$, $N$ and $s$, Algorithm~\ref{alg:nfs-pol-gp} returns polynomials $f_{1}$ and $f_{2}$ such that
\begin{equation*}
  \norm{f_{i}}_{2,s}\leq s^{-\frac{\deg f_{i}}{2}}\left(2^{\frac{d(d-1)}{4}}s^{\frac{d^{2}}{2}}\left(\tilde{a}/a\right)\norm{\vec{c}}_{2,s^{-1}}\right)^{\frac{1}{d-i+1}},\quad\text{for $i=1,2$},
\end{equation*}
where $\vec{c}=[c_{0},\ldots,c_{d}]$ is the GP defined in \eqref{eqn:gp-new-def} and $\tilde{a}=a/\gcd(a,c_{d})$.
\end{theorem}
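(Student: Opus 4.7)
The plan is to combine three ingredients: (i) identify the lattice generated by the rows of $BS$ with the orthogonal lattice $L^{\bot}_{S}$, where $L=\vec{c}\Z$; (ii) apply property~\eqref{lll:bi-bnd} of Theorem~\ref{thm:LLL-properties} to bound $\norm{\vec{b}_{i}}_{2}$ in terms of $\det L^{\bot}_{S}$; and (iii) compute $\det L^{\bot}_{S}$ exactly via Theorem~\ref{thm:det-ortho-lattice}.

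For step (i), Lemma~\ref{lem:gp-basis} shows that an integer vector $(a_{0},\ldots,a_{d})$ is orthogonal to $\vec{c}$ if and only if $\sum a_{i}x^{i}=r_{d-1}\tilde{f}(x)+(px-m)\sum_{i=0}^{d-2}r_{i}x^{i}$ for some integers $r_{i}$. The rows of $B$ are exactly the coefficient vectors of $\tilde{f}$ and of the polynomials $x^{i}(px-m)$ for $0\leq i\leq d-2$, and they are linearly independent since only the first row has a nonzero entry in the last column (namely $a_{d}=\tilde{a}\neq 0$). Hence the rows of $B$ form a $\Z$-basis of $L^{\bot}$, and the rows of $BS$ form a basis of $L^{\bot}_{S}$, which is therefore $d$-dimensional.

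For step (ii), property~\eqref{lll:bi-bnd} of Theorem~\ref{thm:LLL-properties} yields
\begin{equation*}
  \norm{\vec{b}_{i}}_{2}\leq 2^{\frac{d(d-1)}{4(d-i+1)}}\cdot\bigl(\det L^{\bot}_{S}\bigr)^{\frac{1}{d-i+1}}.
\end{equation*}
Since $\vec{b}_{i}=(b_{i,0},b_{i,1}s,\ldots,b_{i,d}s^{d})$, a direct expansion of the skewed $2$-norm (with zeros beyond position $\deg f_{i}$ contributing nothing to either side) gives $\norm{f_{i}}_{2,s}=s^{-\deg f_{i}/2}\norm{\vec{b}_{i}}_{2}$.

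For step (iii), apply Theorem~\ref{thm:det-ortho-lattice} to the one-dimensional lattice $L=\vec{c}\Z$ with basis matrix $\vec{c}$ and $S=\mathrm{diag}(1,s,\ldots,s^{d})$. The quantity $\Omega$ is now $\gcd(c_{0},\ldots,c_{d})$; using $c_{i}=ap^{d-1-i}m^{i}$ for $0\leq i\leq d-1$ together with $\gcd(m,p)=1$, one finds $\gcd(c_{0},\ldots,c_{d-1})=a$, whence $\Omega=\gcd(a,c_{d})=a/\tilde{a}$. A short computation identifies $\sqrt{\sum_{i}(c_{i}/s^{i})^{2}}$ with $s^{-d/2}\norm{\vec{c}}_{2,s^{-1}}$, and combining with $|s_{1}\cdots s_{d+1}|=s^{d(d+1)/2}$ gives
\begin{equation*}
  \det L^{\bot}_{S}=\frac{\tilde{a}}{a}\cdot s^{\frac{d^{2}}{2}}\cdot\norm{\vec{c}}_{2,s^{-1}}.
\end{equation*}
Substituting this into the bound from step (ii) and using $\norm{f_{i}}_{2,s}=s^{-\deg f_{i}/2}\norm{\vec{b}_{i}}_{2}$ yields the claim, after the cosmetic observation $2^{d(d-1)/(4(d-i+1))}=\bigl(2^{d(d-1)/4}\bigr)^{1/(d-i+1)}$. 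There is no single hard step; the main bookkeeping is identifying $\Omega$ (which relies on $\gcd(m,p)=1$) and tracking the various powers of $s$ through the conversions between $\norm{\cdot}_{2}$, $\norm{\cdot}_{2,s}$ and $\norm{\cdot}_{2,s^{-1}}$.
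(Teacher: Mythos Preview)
Your proof is correct and follows essentially the same approach as the paper: identify the lattice with $L^{\bot}_{S}$ via Lemma~\ref{lem:gp-basis}, apply property~\eqref{lll:bi-bnd} of Theorem~\ref{thm:LLL-properties}, and evaluate $\det L^{\bot}_{S}$ via Theorem~\ref{thm:det-ortho-lattice} using $\Omega=\gcd(a,c_{d})$. The only difference is that you spell out the linear independence of the rows of $B$ explicitly, whereas the paper leaves this implicit in the appeal to Lemma~\ref{lem:gp-basis}.
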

\begin{proof}
Suppose that polynomials $f_{1},f_{2}\in\Z[x]$ are obtained by applying Algorithm~\ref{alg:nfs-pol-gp} with parameters $d$, $a$, $p$, $m$, $k$, $N$ and $s$.  Then $\norm{f_{i}}_{2,s}=s^{-(\deg f_{i})/2}\norm{\vec{b}_{i}}_{2}$ for $i=1,2$, where $\vec{b}_{1}$ and $\vec{b}_{2}$ belong to the basis $(\seq{\vec{b}}{1}{d})$ computed in Step~\ref{step:alg:nfs-pol-gp-iii} of the algorithm.  Let $\vec{c}=[c_{0},\ldots,c_{d}]$ be the GP defined in \eqref{eqn:gp-new-def}, $L$ be the $1$-dimensional lattice $\vec{c}\Z$ and $S=\mathrm{diag}(1,s,\ldots,s^{d})$.  Then Lemma~\ref{lem:gp-basis} implies that $(\seq{\vec{b}}{1}{d})$ is a basis of the lattice $L^{\bot}_{S}$.  Moreover, since $(\seq{\vec{b}}{1}{d})$ is LLL-reduced, it follows from property~\ref{lll:bi-bnd} of Theorem~\ref{thm:LLL-properties} that
\begin{equation}\label{thm:nfs-pol-gp-i}
  \norm{f_{i}}_{2,s}\leq 2^{\frac{d(d-1)}{4(d-i+1)}}s^{-\frac{\deg f_{i}}{2}}\det(L^{\bot}_{S})^{\frac{1}{d-i+1}},\quad\text{for $i=1,2$}.
\end{equation}
As $m$ and $p$ are coprime, $\gcd(c_{0},\ldots,c_{d-1})=a$ and thus $\gcd(c_{0},\ldots,c_{d})=\gcd(a,c_{d})$.  Therefore, Theorem~\ref{thm:det-ortho-lattice} implies that the determinant of $L^{\bot}_{S}$ is
\begin{equation}\label{thm:nfs-pol-gp-ii}
  \det L^{\bot}_{S}=\frac{\tilde{a}}{a}s^{\binom{d+1}{2}}\sqrt{c^{2}_{0}+\frac{c^{2}_{1}}{s^{2}}+\ldots+\frac{c^{2}_{d}}{s^{2d}}}=\frac{\tilde{a}}{a}s^{\frac{d^{2}}{2}}\norm{\vec{c}}_{2,s^{-1}},
\end{equation}
where $\tilde{a}=a/\gcd(a,c_{d})$. Combining \eqref{thm:nfs-pol-gp-i} and \eqref{thm:nfs-pol-gp-ii} completes the proof.
\end{proof}

\subsection{Parameter selection for Algorithm~\ref{alg:nfs-pol-gp}}\label{sec:gp-new-param}

In this section, it is shown that for sufficiently large $N$, Algorithm~\ref{alg:nfs-pol-gp} can be used to find degree $d$ polynomials $f_{1},f_{2}\in\Z[x]$ with a common root modulo $N$ and
\begin{equation}\label{eqn:gp-fi-bnd}
  \norm{f_{i}}_{2,s}=O\left(N^{(1/d)(d^{2}-2d+2)/(d^{2}-d+2)}\right),\quad\text{for $i=1,2$}.
\end{equation}
Thus, polynomials of size $O(N^{1/4})$ are obtained for $d=2$; $O(N^{5/24})$, for $d=3$; and $O(N^{5/28})$, for $d=4$.  Corollary~\ref{cor:skewed-resultant-bnd} implies that the exponent for $d=2$ is optimal.  The bound \eqref{eqn:gp-fi-bnd} is obtained without any assumptions on the size of vectors in LLL-reduced bases, which is in contrast to the previous analyses of \cite{prest10,koo11}.

Theorem~\ref{thm:nfs-pol-gp} suggests that the input parameters $a$, $p$, $m$, $k$ and $s$ of Algorithm~\ref{alg:nfs-pol-gp} should be selected so that $\norm{\vec{c}}_{2,s}$ is minimised, where $\vec{c}=[c_{0},\ldots,c_{d}]$ is the corresponding GP defined in \eqref{eqn:gp-new-def}.  Assume that $a$, $p$ and $k$ are positive, and define $\tilde{m}=(kN/a)^{1/d}$.  Then the contribution of $c_{d}$ to $\norm{\vec{c}}_{2,s^{-1}}$ is minimised by selecting $m$ such that $|m-\tilde{m}|$ is small.  Therefore, assume that $m$ is chosen such that $am^{d}\equiv kN\pmod{p}$ and $0\leq m-\tilde{m}\leq ps/d$.  Then
\begin{equation*}\label{eqn:gp-cd-bnd}
  \frac{c_{d}}{s^{\frac{d}{2}}}=\frac{a(m^{d}-\tilde{m}^{d})}{ps^{\frac{d}{2}}}%
                     <\frac{da(m-\tilde{m})m^{d-1}}{ps^{\frac{d}{2}}}%
                     \leq ap^{d-1}s^{\frac{d}{2}}\left(\frac{m}{ps}\right)^{d-1}.
\end{equation*}
The remaining terms of the GP satisfy
\begin{equation*}\label{eqn:gp-ci-bnd}
  \frac{c_{i}}{s^{i-\frac{d}{2}}}=ap^{d-1}s^{\frac{d}{2}}\left(\frac{m}{ps}\right)^{i},\quad\text{for $0\leq i\leq d-1$}.
\end{equation*}
Therefore, assume that $ps\leq m$.  Then
\begin{equation}\label{eqn:gp-norm-c-bnd}
  \norm{\vec{c}}_{2,s^{-1}}<\sqrt{d+1}\,as^{1-\frac{d}{2}}m^{d-1}.
\end{equation}
This bound on $\norm{\vec{c}}_{2,s^{-1}}$ is minimalised by taking the skew parameter $s$ as large as possible.  However, if $s$ is too large, then the basis $(\vec{b}_{1},\ldots,\vec{b}_{d})$ computed in Step~\ref{step:alg:nfs-pol-gp-iii} of Algorithm~\ref{alg:nfs-pol-gp} has $\vec{b}_{1}=\pm(-m,ps,0,\ldots,0)$, resulting in the algorithm returning $f_{1}=\pm(px-m)$.  Following the approach of Prest and Zimmermann~\cite[Section~3.2]{prest10}, the problem of the algorithm returning polynomials such that $\deg f_{1}<d$ is avoided by restricting the size of $s$.

If Algorithm~\ref{alg:nfs-pol-gp} returns a polynomial $f$ of degree less than $d$, then Lemma~\ref{lem:gp-basis} implies that $m$ divides the coefficient of the term of least degree in $f$, from which it follows that $\norm{f}_{2,s}>s^{-(\deg f)/2}m$, for all $s>0$.  Therefore, Theorem~\ref{thm:nfs-pol-gp} and inequality~\eqref{eqn:gp-norm-c-bnd} imply that for $s$ such that
\begin{equation}\label{eqn:gp-new-s-cond}
  2^{\frac{d-1}{4}}s^{\frac{d}{2}}\left(\sqrt{d+1}\,\tilde{a}s^{1-\frac{d}{2}}m^{d-1}\right)^{\frac{1}{d}}\leq m,
\end{equation}
Algorithm~\ref{alg:nfs-pol-gp} returns polynomials $f_{1}$ and $f_{2}$ such that $\deg f_{1}=d$.  Furthermore, the basis vector $\vec{b}_{1}$ corresponding to $f_{1}$ and the vector $\vec{b}=(-m,sp,0,\ldots,0)$ are linearly independent and $\norm{\vec{b}_{1}}_{2}\leq m<\norm{\vec{b}}_{2}$.  Thus, property \ref{lll:bi-minima-bnd} of Theorem~\ref{thm:LLL-properties} implies that
\begin{equation*}
  \norm{f_{2}}_{2,s}=s^{-\frac{\deg f_{2}}{2}}\norm{\vec{b}_{2}}_{2}\leq s^{-\frac{\deg f_{2}}{2}}2^{\frac{d-1}{2}}\norm{\vec{b}}_{2}\leq s^{-\frac{\deg f_{2}}{2}}2^{\frac{d}{2}}m.
\end{equation*}
Hence, for positive $a$, $p$ and $k$, if $m$ and $s$ are chosen such that
\begin{equation}\label{eqn:param-select}
  0\leq m-\tilde{m}\leq\frac{ps}{d},\quad%
  s=\left\lfloor\frac{1}{\sqrt{2}}\left(\frac{m}{\tilde{a}}\sqrt{\frac{2}{d+1}}\right)^{\frac{2}{d^{2}-d+2}}\right\rfloor\quad\text{and}\quad%
  ps\leq m,
\end{equation}
then Algorithm~\ref{alg:nfs-pol-gp} returns polynomials $f_{1}$ and $f_{2}$ such that $\deg f_{1}=d$ and
\begin{equation*}\label{eqn:gp-new-f1-bnd}
 \norm{f_{i}}_{2,s}=O\left(\tilde{a}^{\frac{\deg f_{i}}{d^{2}-d+2}}\left(\frac{kN}{a}\right)^{\frac{1}{d}\left(1-\frac{\deg f_{i}}{d^{2}-d+2}\right)}\right),\quad\text{for $i=1,2$}.
\end{equation*}
Setting $a=O(1)$ and $k=O(1)$ leads to $f_{1}$ satisfying the bound in \eqref{eqn:gp-fi-bnd}.  Then \eqref{eqn:gp-fi-bnd} holds, if the degree of $f_{2}$ is equal to $d$.  Otherwise, replacing $f_{2}$ with $f_{1}+f_{2}$ provides a pair of degree $d$ polynomials that satisfy \eqref{eqn:gp-fi-bnd}.  In the latter case, it is no longer guaranteed that polynomials with $|\sin\theta_{s}|\geq\sqrt{2/3}$ are obtained.

For $m\geq\tilde{m}$, the choice of $s$ in \eqref{eqn:param-select} satisfies $s/d\geq 1$ whenever
\begin{equation}\label{eqn:m-tilde-lbnd}
  \tilde{m}\geq 2^{\frac{d(d-1)}{4}}a(d+1)^{\frac{d^{2}-d+3}{2}}.
\end{equation}
Moreover, if $m$ is positive and $s$ is given by \eqref{eqn:param-select}, then
\begin{equation*}
  \frac{m}{s}\geq\left(2^{\frac{d(d-1)}{4}}\tilde{a}\sqrt{d+1}\,m^{\frac{d(d-1)}{2}}\right)^{\frac{2}{d^{2}-d+2}}>1.
\end{equation*}
Thus, for sufficiently large $N$, setting $a=k=p=1$ and $m=\left\lceil N^{1/d}\right\rceil$ proves the existence of parameters for Algorithm~\ref{alg:nfs-pol-gp} that satisfy \eqref{eqn:param-select}. 

\begin{example}\label{ex:c91-pz}  Let $N$ be the $91$-digit composite number
\begin{align*}
  \mathrm{c}91
   =&4567176039894108704358752160655628192034927306\\
    &969828397739074346628988327155475222843793393.
\end{align*}
For parameters $d=3$, $a=k=p=1$, $m=\left\lceil N^{1/3}\right\rceil$ and $s=23271635$, Algorithm~\ref{alg:nfs-pol-gp} returns the following cubic polynomials:
\begin{align*}
  f_{1}&=10363104x^{3}
 &f_{2}&=66955475x^{3}\\
       &-23437957x^{2}
       &&-151431419x^{2}\\
       &-21147168576512214234486x
       &&+23469760045042762614639x\\
       &-109084939899748327411476171840
       &&-754597461912921474902918473271
\end{align*}
Their norms are $\norm{f_{1}}_{2,s}\approx N^{0.206}$ and $\norm{f_{2}}_{2,s}\approx N^{0.210}$, for $s=23271635$.
\end{example}

For positive $a$, $p$ and $k$ such that \eqref{eqn:m-tilde-lbnd} holds, if
\begin{equation*}
  p\leq\left(2^{\frac{d(d-1)}{4}}\sqrt{d+1}\,\tilde{m}^{\frac{d(d-1)}{2}}\right)^{\frac{2}{d^{2}-d+2}},
\end{equation*}
then each solution $r\in\Z/p\Z$ of $ax^{d}\equiv kN\pmod{p}$ yields at least one value of $m\in\Z$ such that $m\equiv r\pmod{p}$, $0\leq m-\tilde{m}\leq ps/d$ and $ps\leq m$ for the choice of $s$ in \eqref{eqn:param-select}.  Therefore, additional parameters that satisfy \eqref{eqn:param-select} may be constructed by the Chinese remainder theorem based methodology referenced in Section~\ref{sec:gp-new}.  These parameters yield the same theoretical bounds on coefficient size.  However, the ability to construct a large number of small geometric progressions may be leveraged to find polynomials
with smaller coefficients in practice:
\begin{example}\label{ex:c91} Let $N=\mathrm{c}91$, the $91$-digit number from Example~\ref{ex:c91-pz}.  For parameters $d=3$, $a=1$, $k=5$, $s=26611809$,
\begin{equation*}
  p=934237167355490922\quad\text{and}\quad m=2837086552973239856241381969109,
\end{equation*}
which satisfy \eqref{eqn:param-select}, Algorithm~\ref{alg:nfs-pol-gp} returns the following cubic polynomials:
\begin{align*}
  g_{1}&=21545x^{3}
 &g_{2}&=1356640x^{3}\\
       &+3349054x^{2}
       &&+210882368x^{2}\\
       &-10356871479051937193x
       &&-652118673869097609994x\\
       &+1263295294354066431546642250
       &&-11972068980454909092333428939
\end{align*}
The product $\norm{g_{1}}_{2,s}\cdot\norm{g_{2}}_{2,s}$ is approximately $N^{0.368}$, for $s=26611809$.
\end{example}

The condition \eqref{eqn:gp-new-s-cond} on the skew parameter $s$, used to guarantee that Algorithm~\ref{alg:nfs-pol-gp} returns at least one degree $d$ polynomial, is rather pessimistic in practical circumstances, with the condition being sufficient, but frequently far from necessary.  Consequently, in practice, it may prove worthwhile to use skews that are larger than the value suggested in \eqref{eqn:param-select}.
\begin{example}\label{ex:c91-large-skew}  Let $N=\mathrm{c}91$, the $91$-digit number from Example~\ref{ex:c91-pz}.  For parameters $d=3$, $a=1$, $k=1$, $s=23271635$,
\begin{equation*}
  p=310502797375403107200\quad\text{and}\quad m=1659138281393456348393832527057,
\end{equation*}
which satisfy \eqref{eqn:param-select}, Algorithm~\ref{alg:nfs-pol-gp} returns a pair of cubic polynomials whose product of coefficient norms is approximately $N^{0.396}$ for $s=23271635$, and approximately $N^{0.370}$ for $s=5001852224$.  If, instead, any skew $s\in\left(10^{9.37},10^{9.55}\right)$ is used, the algorithm returns the following cubic polynomials:
\begin{align*}
  h_{1}&=2x^{3}
 &h_{2}&=2x^{3}\\
       &- 46088505322x^{2}
       &&-46088505322x^{2}\\
       &+130858683603618028497x
       &&+441361480979021135697x\\
       &+616682434763766331165127093132
       &&-1042455846629690017228705433925
\end{align*}
The product $\norm{h_{1}}_{2,s}\cdot\norm{h_{2}}_{2,s}$ is approximately $N^{0.347}$, for $s=6425664302$.

For parameters $d=3$, $a=1$, $k=1$, any skew $s\in\left(10^{9.25},10^{9.46}\right)$,
\begin{equation*}
  p=633983687139\quad\text{and}\quad m=1659138281147271980652828686480,
\end{equation*}
Algorithm~\ref{alg:nfs-pol-gp} returns the following cubic polynomials:
\begin{align*}
  k_{1}&=8x^{3}
 &k_{2}&=8x^{3}\\
       &-55x^{2}
       &&-55x^{2}\\
       &+157979116111722504146x
       &&+157979116745706191285x\\
       &+78672185263313067882594467256
       &&-1580466095883958912770234219224
\end{align*}
The product $\norm{k_{1}}_{2,s}\cdot\norm{k_{2}}_{2,s}$ is approximately $N^{0.345}$, for $s=4898436262$.
\end{example}

\section{The Koo--Jo--Kwon length $d+2$ construction revisited}\label{sec:gp-new-mod}

By utilising their length $d+2$ GP construction, Koo, Jo and Kwon obtained an algorithm for producing nonlinear polynomials of degree at most $d$ such that the coefficient of $x^{d-1}$ in each polynomial is equal to zero \cite[Corollary 4]{koo11}.  With the aim of producing polynomials with very large skews, the generation of number field sieve polynomials with this property had previously been considered for linear algorithms by Kleinjung~\cite{kleinjung08}.  In this section, it is shown that larger skews than those obtained in Section~\ref{sec:gp-new-param} are able to be used in the Koo--Jo--Kwon algorithm.  As a result, nonlinear polynomial pairs with smaller coefficient norms are found.

Theorem~\ref{thm:gp-classify} suggests the following extension of the length $d+2$ GP construction of Koo, Jo and Kwon~\cite[Section~4.2]{koo11}:  if $a$, $p$, $k$, and $m$ are nonzero integers such that $\gcd(ap,N)=1$, $\gcd(m,p)=1$ and $(am^{d}-kN)/p^{2}$ is a nonzero integer, then
\begin{equation}\label{eqn:gp-new-def-d-1=0}
   [\seq{c}{0}{d+1}]=\left[ap^{d-1},ap^{d-2}m,\ldots,am^{d-1},\frac{am^{d}-kN}{p},\frac{m(am^{d}-kN)}{p^{2}}\right],
\end{equation}
is a GP with ratio $m/p$ modulo $N$.  The Koo--Jo--Kwon construction then corresponds to the case where $a=1$.  If $[\seq{c}{0}{d+1}]$ is a GP defined by \eqref{eqn:gp-new-def-d-1=0}, then 
\begin{equation*}\label{eqn:gp-d+2-lin}
  m[\seq{c}{0}{d}]-p[\seq{c}{1}{d+1}]=[0,\ldots,0,kN,0].
\end{equation*}
Thus, a vector $(a_{0},\ldots,a_{d})\in\Z^{d+1}$ is orthogonal to $[\seq{c}{0}{d}]$ and $[\seq{c}{1}{d+1}]$ if and only if $a_{d-1}=0$ and $(\seq{a}{0}{d})$ is orthogonal to $(\seq{c}{0}{d-2},0,c_{d})/p\in\Z^{d+1}$.  Consequently, the proof of Lemma~\ref{lem:gp-basis} is readily modified to obtain the following analogous result:
\begin{lemma}\label{lem:gp-basis2} Let $[c_{0},\ldots,c_{d+1}]$ be a GP of the form \eqref{eqn:gp-new-def-d-1=0}, where $d\geq 3$ and $a$, $p$, $m$ and $k$ are nonzero integers such that $\gcd(m,p)=1$, $\gcd(ap,N)=1$ and $(am^d-kN)/p^{2}$ is a nonzero integer.  Define $\tilde{a}=a/\gcd(a,c_{d}/p)$ and $\tilde{k}=k/\gcd(a,c_{d}/p)$.  Then for any degree $d$ integer polynomial $\tilde{f}=\sum^{d}_{i=0}\tilde{a}_{i}x^{i}$ with $\tilde{a}_{d}=\tilde{a}$, $\tilde{a}_{d-1}=0$ and $\tilde{f}(m/p)p^{d}=\tilde{k}N$, a vector $(a_{0},\ldots,a_{d})\in\Z^{d+1}$ is orthogonal to $[\seq{c}{0}{d}]$ and $[\seq{c}{1}{d+1}]$ if and only if there exist integers $r_{0},\ldots,r_{d-2}$ such that
\begin{equation*}
  \sum^{d}_{i=0}a_{i}x^{i}=r_{d-2}\tilde{f}(x)+(px-m)\cdot\sum^{d-3}_{i=0}r_{i}x^{i}.
\end{equation*}
Moreover, such a polynomial $\tilde{f}$ exists and can be computed with Algorithm~\ref{alg:base-m-p}.
\end{lemma}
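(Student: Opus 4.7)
The plan is to mimic the proof of Lemma~\ref{lem:gp-basis} while exploiting the extra structure that comes from the final term $c_{d+1}=mc_{d}/p$ of the longer progression.

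First, I would observe the linear relation $m[c_{0},\ldots,c_{d}] - p[c_{1},\ldots,c_{d+1}] = [0,\ldots,0,kN,0]$, where $kN$ sits in position $d-1$: the entries agree by telescoping for $i<d-1$, the $i=d-1$ entry gives $am^{d}-p c_{d}=kN$, and the $i=d$ entry vanishes by the definition of $c_{d+1}$. Taking inner products with $\vec{a}=(a_{0},\ldots,a_{d})$ shows that $m\langle\vec{a},[c_{0},\ldots,c_{d}]\rangle - p\langle\vec{a},[c_{1},\ldots,c_{d+1}]\rangle = a_{d-1}kN$. Since $p,kN\neq 0$, this reduces the problem to showing that $\vec{a}$ is orthogonal to $[c_{0},\ldots,c_{d}]$ and satisfies $a_{d-1}=0$ if and only if the stated representation exists.

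Next, I would handle the existence of $\tilde{f}$. The hypothesis that $(am^{d}-kN)/p^{2}$ is an integer, together with $\gcd(m,p)=1$, gives $p^{2}\mid am^{d}-kN$; dividing by $g=\gcd(a,c_{d}/p)$ preserves this divisibility, so $p^{2}\mid\tilde{a}m^{d}-\tilde{k}N$. Thus Algorithm~\ref{alg:base-m-p} may be invoked with $n=d$, $j=d-1$, $a_{d}=\tilde{a}$, and $a_{d-1}=0$; the required divisibility condition for its input is exactly $p^{2}\mid\tilde{k}N-\tilde{a}m^{d}$. This produces the desired $\tilde{f}$ with $\tilde{a}_{d}=\tilde{a}$, $\tilde{a}_{d-1}=0$, and $\tilde{f}(m/p)p^{d}=\tilde{k}N$.

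For the characterisation itself, I would repeat verbatim the first calculation of the proof of Lemma~\ref{lem:gp-basis}: using $\tilde{a}k=a\tilde{k}$, one shows that orthogonality of $\vec{a}$ to $[c_{0},\ldots,c_{d}]$ implies $\sum_{i=0}^{d}a_{i}(m/p)^{i}=(a_{d}/\tilde{a})\tilde{f}(m/p)$, and that $a_{d}/\tilde{a}\in\Z$. Gauss' lemma, with $\gcd(m,p)=1$, then gives an integer polynomial $r(x)$ with $\sum a_{i}x^{i}-(a_{d}/\tilde{a})\tilde{f}(x)=(px-m)\cdot r(x)$. Because the left-hand side has degree at most $d-1$, the cancellation of leading terms, its $x^{d-1}$ coefficient equals $a_{d-1}-(a_{d}/\tilde{a})\cdot 0=a_{d-1}$. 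When $a_{d-1}=0$, the left-hand side has degree at most $d-2$, forcing $\deg r\leq d-3$, which is exactly the representation in the lemma with $r_{d-2}=a_{d}/\tilde{a}$. The converse is a direct substitution: any vector of the stated form automatically has $a_{d-1}=0$ (since $\tilde{a}_{d-1}=0$ and the product $(px-m)\sum_{i=0}^{d-3}r_{i}x^{i}$ has degree at most $d-2$), and the same computation as in Lemma~\ref{lem:gp-basis} shows $\sum a_{i}c_{i}=0$; orthogonality to $[c_{1},\ldots,c_{d+1}]$ then follows immediately from the telescoping identity above.

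The only subtle point is the degree bookkeeping in the quotient by $(px-m)$: one has to extract \emph{two} orders of cancellation (the cancellation of the $x^{d}$ term, and the separate cancellation of the $x^{d-1}$ term from the two conditions $a_{d-1}=0$ and $\tilde{a}_{d-1}=0$) before dividing out $(px-m)$, which is what drops the polynomial $r$ from degree $d-2$ to degree $d-3$ and accounts for the reduction from $d$ free parameters to $d-1$. Beyond this, every step is a mechanical adaptation of the arguments already given in Lemma~\ref{lem:gp-basis}.
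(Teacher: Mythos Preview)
Your proposal is correct and follows essentially the same route as the paper. The paper does not write out a full proof: it records the telescoping identity $m[c_{0},\ldots,c_{d}]-p[c_{1},\ldots,c_{d+1}]=[0,\ldots,0,kN,0]$, deduces that orthogonality to both progressions is equivalent to $a_{d-1}=0$ together with orthogonality to a single vector (the paper phrases this as orthogonality to $(c_{0},\ldots,c_{d-2},0,c_{d})/p$, which for vectors with $a_{d-1}=0$ is the same as orthogonality to $[c_{0},\ldots,c_{d}]$), and then simply states that the proof of Lemma~\ref{lem:gp-basis} is ``readily modified'' --- which is exactly what you carry out, including the extra degree drop from $\tilde{a}_{d-1}=0$ and $a_{d-1}=0$.
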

It follows from Lemma~\ref{lem:gp-basis2} that an analogue of Corollary~\ref{cor:gp-basis} holds:  coprime degree $d$ polynomials $f_{1}$ and $f_{2}$ with coefficients vectors that are orthogonal to $[\seq{c}{0}{d}]$ and $[\seq{c}{1}{d+1}]$ satisfy the bound \eqref{eqn:cor-gp-basis} with the left-hand side of the inequality replaced by $|\tilde{a}^{2}\tilde{k}N|$, where $\tilde{a}=a/\gcd(a,c_{d}/p)$ and $\tilde{k}=k/\gcd(a,c_{d}/p)$.  In addition, Lemma~\ref{lem:gp-basis2} suggests the following algorithm:
\begin{algorithm}\label{alg:nfs-pol-gp-d-1=0}\

{\noindent {\scshape Input:} An integer $d\geq 3$; nonzero integers $a$, $p$, $m$ and $k$ such that $\gcd(m,p)=1$, $\gcd(ap,N)=1$ and $(am^{d}-kN)/p^{2}$ is a nonzero integer; and a positive integer $s$.}

{\noindent {\scshape Output:} Integer polynomials $f_{1}$ and $f_{2}$ of degree at most $d$ with common root $m/p$ modulo $N$.}
\begin{list}{\theenumi.}{\usecounter{enumi}  \labelsep=1ex \labelwidth=2ex \leftmargin=12pt \itemindent=0ex}
  \item Compute $g=\gcd(a,(am^{d}-kN)/p^{2})$ and apply Algorithm~\ref{alg:base-m-p} with $n=d$, $j=d-1$, $a_{n}=a/g$, $a_{n-1}=0$ and $\tilde{k}=k/g$ to obtain an integer polynomial $\tilde{f}=\sum^{d}_{i=0}a_{i}x^{i}$ such that $\tilde{f}(m/p)p^{d}=\tilde{k}N$.
  \item Compute an LLL-reduced basis $(\seq{\vec{b}}{1}{d-1})$ of the lattice with $(d-1)\times d$ basis matrix $BS$, where $S=\mathrm{diag}\left(1,s,\ldots,s^{d-2},s^{d}\right)$ and
     \begin{equation*}
     B=\begin{pmatrix}
        a_{0} & \ldots & \ldots & a_{d-2} &    a_{d} \\
             0 & \ldots &     -m &       p &        0 \\
        \vdots & \revdots & \revdots &  \vdots &   \vdots \\
               -m &      p & \ldots &       0 &        0
     \end{pmatrix}.
    \end{equation*}
  \item Let $\vec{b}_{i}=\left(b_{i,0},\ldots,b_{i,d-1}\right)\cdot S$, for $i=1,2$.  Compute and return the polynomials $f_{1}=b_{1,d-1}x^{d}+\sum^{d-2}_{j=0}b_{1,j}x^{j}$ and $f_{2}=b_{2,d-1}x^{d}+\sum^{d-2}_{j=0}b_{2,j}x^{j}$.
\end{list}
\end{algorithm}
In the next section, parameter selection for Algorithm~\ref{alg:nfs-pol-gp-d-1=0} is considered.

Koo, Jo and Kwon~\cite[Section~3]{koo11} observe that a single GP may be used to generate several polynomials of various degrees that share a common root modulo $N$.  In particular, for a GP $\vec{c}$ of the form \eqref{eqn:gp-new-def-d-1=0}, it is possible to generate degree $d$ and $d+1$ polynomials.  In the former case, Algorithm~\ref{alg:nfs-pol-gp} or Algorithm~\ref{alg:nfs-pol-gp-d-1=0} may be used.  In the latter case, the necessary computation of a basis for $(\vec{c}\Z)^{\bot}$ is aided by the following analogue of Lemma~\ref{lem:gp-basis}:
\begin{lemma}\label{lem:gp-basis3} Let $\vec{c}=[c_{0},\ldots,c_{d+1}]$ be a GP of the form \eqref{eqn:gp-new-def-d-1=0}, where $d\geq 1$ and $a$, $p$, $m$ and $k$ are nonzero integers such that $\gcd(m,p)=1$, $\gcd(ap,N)=1$ and $(am^d-kN)/p^{2}$ is a nonzero integer.  Define $\tilde{a}=a/\gcd(a,c_{d}/p)$ and $\tilde{k}=k/\gcd(a,c_{d}/p)$.  Then for any degree $d+1$ integer polynomial $\tilde{f}=\sum^{d}_{i=0}\tilde{a}_{i}x^{i}$ with $\tilde{a}_{d+1}m+\tilde{a}_{d}p=\tilde{a}$ and $\tilde{f}(m/p)p^{d+1}=\tilde{k}N$, a vector $(a_{0},\ldots,a_{d+1})\in\Z^{d+2}$ is orthogonal to $\vec{c}$ if and only if there exist integers $r_{0},\ldots,r_{d}$ such that
\begin{equation}\label{eqn:gp-basis2}
  \sum^{d+1}_{i=0}a_{i}x^{i}=r_{d}\tilde{f}(x)+(px-m)\cdot\left(r_{d-1}x^{d}+\sum^{d-2}_{i=0}r_{i}x^{i}\right).
\end{equation}
Moreover, such a polynomial $\tilde{f}$ exists and can be computed with Algorithm~\ref{alg:base-m-p}.
\end{lemma}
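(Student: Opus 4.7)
The plan is to follow the template of Lemma~\ref{lem:gp-basis}'s proof, adapted to the length $d+2$ progression and the degree $d+1$ polynomial $\tilde{f}$. To see that such a $\tilde{f}$ exists, use Bezout (valid since $\gcd(m,p)=1$) to pick integers $\tilde{a}_{d+1},\tilde{a}_d$ with $\tilde{a}_{d+1}m+\tilde{a}_d p=\tilde{a}$. Setting $e=c_d/p=(am^d-kN)/p^2\in\Z$, the definitions $\tilde{a}=a/g$ and $\tilde{k}=k/g$ with $g=\gcd(a,e)$ give $\tilde{a}m^d-\tilde{k}N=p^2(e/g)$, so $p^2$ divides $\tilde{k}N-(\tilde{a}_{d+1}m^{d+1}+\tilde{a}_d pm^d)=\tilde{k}N-\tilde{a}m^d$. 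Hence Algorithm~\ref{alg:base-m-p} applied with $n=d+1$, $j=d$, $a_n=\tilde{a}_{d+1}$, $a_{n-1}=\tilde{a}_d$ and the given $\tilde{k}$ returns a polynomial $\tilde{f}=\sum_{i=0}^{d+1}\tilde{a}_i x^i$ with the required properties.

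For the forward direction, suppose $(a_0,\ldots,a_{d+1})\in\Z^{d+2}$ is orthogonal to $\vec{c}$. Substituting the values of $c_i$, this rewrites as
\begin{equation*}
a\sum_{i=0}^{d-1}a_i p^{d-1-i}m^i + e(pa_d+ma_{d+1})=0,
\end{equation*}
and dividing through by $g$ together with $\gcd(\tilde{a},e/g)=1$ forces $\tilde{a}\mid (pa_d+ma_{d+1})$. Define $r_d=(pa_d+ma_{d+1})/\tilde{a}\in\Z$. Multiplying the displayed relation by $p^2/a$ and using $\tilde{a}m^d-\tilde{k}N=p^2(e/g)$ then yields $f(m/p)p^{d+1}=r_d\tilde{k}N=r_d\tilde{f}(m/p)p^{d+1}$, so $(f-r_d\tilde{f})(m/p)=0$. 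By Gauss's lemma and $\gcd(m,p)=1$, there exists $h\in\Z[x]$ of degree at most $d$ with $f-r_d\tilde{f}=(px-m)h$.

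The key step, and the main obstacle beyond what Lemma~\ref{lem:gp-basis}'s proof provides, is to show that the $x^{d-1}$-coefficient of $h$ vanishes, so that the decomposition has the special shape in \eqref{eqn:gp-basis2}. Writing $h=\sum h_i x^i$ and equating coefficients of $x^{d+1}$ and $x^d$ in $f-r_d\tilde{f}=(px-m)h$ gives $ph_d=a_{d+1}-r_d\tilde{a}_{d+1}$ and $ph_{d-1}-mh_d=a_d-r_d\tilde{a}_d$. Multiplying the second by $p$ and substituting the first produces
\begin{equation*}
p^2 h_{d-1}=(pa_d+ma_{d+1})-r_d(p\tilde{a}_d+m\tilde{a}_{d+1})=r_d\tilde{a}-r_d\tilde{a}=0,
\end{equation*}
where the Bezout constraint $\tilde{a}_{d+1}m+\tilde{a}_d p=\tilde{a}$ on $\tilde{f}$ is essential; without it the $x^{d-1}$ term would generically survive. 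Setting $r_{d-1}=h_d$ and $r_i=h_i$ for $0\le i\le d-2$ completes the forward direction. The reverse direction is a direct verification: the decomposition forces $f(m/p)=r_d\tilde{f}(m/p)$ and, by comparing leading coefficients in the same way, $pa_d+ma_{d+1}=r_d\tilde{a}$; reinserting both into the first displayed equation of the previous paragraph recovers $\sum a_i c_i=0$.
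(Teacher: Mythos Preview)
Your argument is correct and matches the paper's approach: the paper only spells out the existence of $\tilde{f}$ (via B\'ezout and Algorithm~\ref{alg:base-m-p}, exactly as you do) and leaves the equivalence as a routine adaptation of the proof of Lemma~\ref{lem:gp-basis}, which your forward/reverse computations carry out in full, including the crucial cancellation $p^{2}h_{d-1}=(pa_{d}+ma_{d+1})-r_{d}(p\tilde{a}_{d}+m\tilde{a}_{d+1})=0$. One small point: when invoking B\'ezout you should also require $\tilde{a}_{d+1}\neq 0$ (as the paper does) so that the resulting $\tilde{f}$ genuinely has degree $d+1$; this is always possible since the B\'ezout solutions form an arithmetic progression.
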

To establish the existence of $\tilde{f}$ in Lemma~\ref{lem:gp-basis3}, observe that integers $\tilde{a}_{d}$ and $\tilde{a}_{d+1}$ such that $\tilde{a}_{d+1}\neq 0$ and $\tilde{a}_{d+1}m+\tilde{a}_{d}p=\tilde{a}$ exist, since $\gcd(m,p)=1$.  For any such $\tilde{a}_{d}$ and $\tilde{a}_{d+1}$, a polynomial $\tilde{f}$ that satisfies the conditions of the lemma is obtained by applying Algorithm~\ref{alg:base-m-p} with $n=d+1$, $j=d$, $a_{n}=\tilde{a}_{d+1}$ and $a_{n-1}=\tilde{a}_{d}$:
\begin{equation*}
  \tilde{k}N-\tilde{a}_{d+1}m^{d+1}-\tilde{a}_{d}m^{d}p\equiv\tilde{k}N-\tilde{a}m^{d}\equiv 0\pmod{p^{2}},
\end{equation*}
since $am^{d}-kN=(c_{d}/p)p^{2}$ and $\gcd(a,N)=1$.  Additionally, setting $r_{d}=p$ and $r_{d-1}=-\tilde{a}_{d+1}$ in \eqref{eqn:gp-basis2} proves the existence of a degree $d$ polynomial $f\in\Z[x]$ with coefficient vector that is orthogonal to $[c_{0},\ldots,c_{d}]$ and $f(m/p)\neq 0$.  Consequently, to avoid finding polynomials of degree less than $d+1$, it is not sufficient to avoid finding polynomials that have $m/p$ as a root in a manner analogous to Section~\ref{sec:gp-new-param}.

\subsection{Parameter selection for Algorithm~\ref{alg:nfs-pol-gp-d-1=0}}\label{sec:gp-new-mod-param}

By modifying the proof of Theorem~\ref{thm:nfs-pol-gp}, it is shown that for input parameters $a$, $p$, $m$, $k$ and $s$, Algorithm~\ref{alg:nfs-pol-gp-d-1=0} returns polynomials $f_{1}$ and $f_{2}$ such that
\begin{equation*}
  \norm{f_{1}}_{2,s}\leq s^{-\frac{\deg f_{1}}{2}}2^{\frac{d-2}{4}}\left(s^{\frac{d^{2}-2d+2}{2}}\frac{\tilde{a}}{a|p|}\sqrt{\frac{c^{2}_{0}}{s^{-d}}+\frac{c^{2}_{1}}{s^{2-d}}+\ldots+\frac{c^{2}_{d-2}}{s^{d-4}}+\frac{c^{2}_{d}}{s^{d}}}\,\right)^{\frac{1}{d-1}},
\end{equation*}
where $[c_{0},\ldots,c_{d+1}]$ is the GP defined in \eqref{eqn:gp-new-def-d-1=0} and $\tilde{a}=a/\gcd(a,c_{d}/p)$.  Furthermore, if $\deg f_{1}<d$, then Lemma~\ref{lem:gp-basis2} implies that $\norm{f_{1}}_{2,s}>s^{-(\deg f_{1})/2}m$, for all $s>0$.  Therefore, repeating arguments from Section~\ref{sec:gp-new-param} shows that for positive $a$, $p$ and $k$, if $m$ and $s$ are chosen such that
\begin{equation}\label{eqn:param-select-d+2}
  0\leq m-\left(\frac{kN}{a}\right)^{\frac{1}{d}}\leq \frac{ps}{d},\quad%
  s=\left\lfloor\frac{1}{\sqrt{2}}\left(\frac{p}{\tilde{a}}\sqrt{\frac{2}{d}}\right)^{\frac{2}{d^{2}-3d+4}}\right\rfloor\quad\text{and}\quad%
  ps\leq m,
\end{equation}
then Algorithm~\ref{alg:nfs-pol-gp-d-1=0} returns polynomials $f_{1}$ and $f_{2}$ such that $\deg f_{1}=d$ and
\begin{equation*}
 \norm{f_{i}}_{2,s}=O\left(\tilde{a}^{\frac{\deg f_{i}}{d^{2}-3d+4}}p^{-\frac{\deg f_{i}}{d^{2}+3d+4}}\left(\frac{kN}{a}\right)^{\frac{1}{d}}\right),\quad\text{for $i=1,2$}.
\end{equation*}
This bound is minimised and the skew parameter maximised by selecting the parameter $p$ as large as possible.  In particular, if the constraints in \eqref{eqn:param-select-d+2} are satisfied and $p=\Theta(m/s)$, then $s=\Theta((m/\tilde{a})^{2/(d^{2}-3d+6)})$, which is larger than the skew in \eqref{eqn:param-select}, and Algorithm~\ref{alg:nfs-pol-gp-d-1=0} returns polynomials $f_{1}$ and $f_{2}$ such that $\deg f_{1}=d$ and
\begin{equation}\label{eqn:f1-bnd-d+2-b}
 \norm{f_{i}}_{2,s}=O\left(\tilde{a}^{\frac{\deg f_{i}}{d^{2}-3d+6}}\left(\frac{kN}{a}\right)^{\frac{1}{d}\left(1-\frac{\deg f_{i}}{d^{2}-3d+6}\right)}\right),\quad\text{for $i=1,2$}.
\end{equation}
Similar to Section~\ref{sec:gp-new-param}, if the degree of $f_{2}$ is not equal to $d$, then a second degree $d$ polynomial that satisfies the bound \eqref{eqn:f1-bnd-d+2-b} is found by taking a linear combination of the polynomials $f_{1}$ and $f_{2}$.  Therefore, if $a=O(1)$ and $k=O(1)$, then it is possible to find a pair of degree $d$ polynomials $f_{1}$ and $f_{2}$ such that
\begin{equation*}\label{eqn:gp-fi-bnd-d+2}
  \norm{f_{i}}_{2,s}=O\left(N^{(1/d)(d^{2}-4d+6)/(d^{2}-3d+6)}\right),\quad\text{for $i=1,2$}.
\end{equation*}
Thus, polynomials of size $O(N^{1/6})$ are obtained for $d=3$, and size $O(N^{3/20})$ for $d=4$.  Corollary~\ref{cor:skewed-resultant-bnd} implies that the exponent for $d=3$ is optimal.

In Section~\ref{sec:gp-new-param}, the observation that each solution $r\in\Z/p\Z$ of $ax^{d}\equiv kN\pmod{p}$ yields a value $m\equiv r\pmod{p}$ such that $0\leq m-\tilde{m}<p$, where $\tilde{m}=(kN/a)^{1/d}$, was used to assert the existence of many parameters that satisfy the conditions leading to the bound \eqref{eqn:gp-fi-bnd}.  However, for parameters that satisfy \eqref{eqn:param-select-d+2}, $ps<p^{1+2/(d^{2}-3d+4)}$.  In particular, $ps$ is less than $p^{3/2}$ for $d=3$, and less than $p^{5/4}$ for $d=4$.  Therefore, it may occur that $ax^{d}\equiv kN\pmod{p^{2}}$ has a solution $r\in\Z/p^{2}\Z$, but there is no integer $m\equiv r\pmod{p^{2}}$ such that $0\leq m-\tilde{m}\leq ps/d$, as required by \eqref{eqn:param-select-d+2}.  This problem is compounded by the requirement that $p$ is taken as large as possible in order to obtain \eqref{eqn:f1-bnd-d+2-b}.  Thus, finding parameters for Algorithm~\ref{alg:nfs-pol-gp-d-1=0} that satisfy the conditions leading to \eqref{eqn:f1-bnd-d+2-b} is difficult.  This problem is addressed in the next section, where methods for generating parameters for Algorithm~\ref{alg:nfs-pol-gp-d-1=0} are discussed.

\subsection{Parameter generation for Algorithm~\ref{alg:nfs-pol-gp-d-1=0}}
Motivated by the discussion of the previous section, methods of generating parameters for Algorithm~\ref{alg:nfs-pol-gp-d-1=0} such that $p\in[B,2B]$ and $|m-\tilde{m}|=O(p^{1+\varepsilon})$, where $B$ is large, $\tilde{m}$ is a real number and $0\leq\varepsilon<1$, are discussed in this section.

Koo, Jo and Kwon~\cite[Proposition~2]{koo11} propose using Hensel lifting for prime values of $p$ to compute corresponding values of $m$ such that $|m-\tilde{m}|$ is small.  For example, after selecting $a$, $p$ and $k$ such that $p\in[B,2B]$ is prime and does not divide $adkN$, suppose there exists a solution $r\in[-p/2,p/2)\cap\Z$ of
\begin{equation}\label{eqn:m0tilde-modp}
 a(\tilde{m}_{0}+x)^{d}\equiv kN\pmod{p},
\end{equation}
where $\tilde{m}_{0}=\left\lfloor\tilde{m}+1/2\right\rfloor$, and there exists an integer $t=O(p^{\varepsilon})$ such that
\begin{equation*}
  ad(\tilde{m}_{0}+r)^{d-1}t\equiv-\frac{a(\tilde{m}_{0}+r)^{d}-kN}{p}\pmod{p}.
\end{equation*}
Then $r^{*}=r+tp$ satisfies $r^{*}=O(p^{1+\varepsilon})$ and Hensel's lemma implies that 
\begin{equation}\label{eqn:m0tilde-modp2}
  a\left(\tilde{m}_{0}+r^{*}\right)^{d}\equiv kN\pmod{p^{2}}.
\end{equation}
Therefore, parameters $a$, $k$, $p$ and $m=\tilde{m}_{0}+r^{*}$ may be used in Algorithm~\ref{alg:nfs-pol-gp-d-1=0} and $|m-\tilde{m}|=O(p^{1+\varepsilon})$.  Arguments used by Bai~\cite[Section~4.2]{bai11} are now adapted to provide heuristic evidence of the existence of solutions $r^{*}$ of \eqref{eqn:m0tilde-modp2} with $r^{*}=O(p^{1+\varepsilon})$.

Heuristically, when $a$ and $k$ are fixed, \eqref{eqn:m0tilde-modp} has on average about one solution for each prime $p\in[B,2B]$.  Thus, for fixed $a$ and $k$, assuming the solutions lift to solutions $r^{*}$ of \eqref{eqn:m0tilde-modp2} that are uniformly distributed in the interval $[-2B^2,2B^{2})$, it follows that the total number of pairs $(p,r^{*})$ with $p\in[B,2B]$ and $r^{*}=O(p^{1+\varepsilon})$ that satisfy \eqref{eqn:m0tilde-modp2} is about
\begin{equation*}
  \sum_{\substack{p\in[B,2B]\\\text{$p$ prime}}}\frac{2B^{1+\varepsilon}}{p^{2}}%
  \approx 2B^{1+\varepsilon}\int^{2B}_{B}\frac{1}{x^{2}\log x}\,dx%
  \approx 2B^{1+\varepsilon}\int^{2B}_{B}\frac{1+\log x}{\left(x\log x\right)^{2}}\,dx%
  \approx\frac{B^{\varepsilon}}{\log B}.
\end{equation*}
Therefore, it is expected that solutions $r^{*}$ of \eqref{eqn:m0tilde-modp2} with $r^{*}=O(p^{1+\varepsilon})$ exist for large $B$.  However, to find such a solution it may be necessary to compute and lift the solutions of \eqref{eqn:m0tilde-modp} for many values of $a$, $k$ and $p$.  This heuristic evidence agrees with numerical evidence provided by Koo, Jo and Kwon~\cite[Section~4.2]{koo11}, which suggests that good parameters for Algorithm~\ref{alg:nfs-pol-gp-d-1=0} with $p$ prime exist with high probability for each $N$, albeit in very small quantities.

A method proposed by Kleinjung~\cite{kleinjung08} in the setting of linear algorithms, but which may be used to generate parameters for Algorithm~\ref{alg:nfs-pol-gp-d-1=0}, reduces the size of the primes $p$ for which \eqref{eqn:m0tilde-modp2} is solved and relaxes the size requirements on the solutions by exploiting a special case of the Chinese remainder theorem.  Suppose that, for fixed values of $a$ and $k$, a solution $r^{*}$ of \eqref{eqn:m0tilde-modp2} is found for two distinct prime values of $p$, say $p_{1}$ and $p_{2}$.  Then
\begin{equation*}
  a\left(\tilde{m}_{0}+r^{*}\right)^{d}\equiv kN\pmod{p^{2}_{1}p^{2}_{2}}.
\end{equation*}
Therefore, parameters $a$, $k$, $p=p_{1}p_{2}$ and $m=\tilde{m}_{0}+r^{*}$ may be used in Algorithm~\ref{alg:nfs-pol-gp-d-1=0}.  Such a value of $r^{*}$ is called a \emph{collision}.  Searching for collisions offers the advantage that the two primes $p_{1}$ and $p_{2}$ need only belong to the interval $[\sqrt{B},\sqrt{2B}]$, reducing the time spent computing solutions of \eqref{eqn:m0tilde-modp2} for each $p_{i}$.  Moreover, if $r^{*}=O(B^{1+\varepsilon})$, then $|m-\tilde{m}|=O(p^{1+\varepsilon})$.  Thus, for each $p_{i}$, solutions of \eqref{eqn:m0tilde-modp2} with $r^{*}=O(p^{2}_{i})$ are computed, which exist with high probability.  However, collisions occur with low probability for large $B$, balancing these benefits.  For an analysis of the method and a description of some modifications aimed at improving efficiency, see Kleinjung~\cite{kleinjung08} and Bai~\cite[Section~4.2]{bai11}.

A third method of parameter generation for Algorithm~\ref{alg:nfs-pol-gp-d-1=0}, which appears to have limited practical value, once again uses the Chinese remainder theorem to construct parameters for Algorithm~\ref{alg:nfs-pol-gp-d-1=0}, but eliminates the need to find collisions.  Suppose that distinct primes $p_{1},\ldots,p_{n}$, integers $r^{*}_{1},\ldots,r^{*}_{n}$, and positive integers $e_{1},\ldots,e_{n}$ are found such that
\begin{equation*}
  a(r^{*}_{i}+\tilde{m}_{0})^{d}\equiv kN\pmod{p^{2e_{i}}_{i}},\quad\text{for $1\leq i\leq n$}.
\end{equation*}
Then, for positive integers $M$, $z$ and $\ell$, existing list decoding algorithms for Chinese remainder codes~\cite{guruswami00,boneh02,coxon13} are able to find all $r^{*}\in[-M,M]\cap\Z$ such that
\begin{equation}\label{eqn:list-crt-bnd}
  \prod^{n}_{i=1}p^{\chi_{i}(r^{*})}_{i}>\left(2^{\frac{\ell}{4}}\sqrt{\ell+1}M^{\frac{\ell}{2}}\right)^{\frac{1}{2z}}\left(\prod^{n}_{i=1}p^{e_{i}}_{i}\right)^{\frac{z+1}{2(\ell+1)}},
\end{equation}
where $\chi_{i}:\Z\rightarrow\{0,\ldots,e_{i}\}$ maps $r^{*}$ to the maximum value $\chi\in\{0,\ldots,e_{i}\}$ such that $r^{*}\equiv r^{*}_{i}\pmod{p^{2\chi}_{i}}$.  For any such $r^{*}$, parameters $a$, $k$, $p=\prod^{n}_{i=1}p^{\chi_{i}(r^{*})}_{i}$ and $m=\tilde{m}_{0}+r^{*}$ may be used in Algorithm~\ref{alg:nfs-pol-gp-d-1=0} and $|m-\tilde{m}|=O(M)$.  Thus, $M\approx B^{1+\varepsilon}$ should be used and the remaining parameters chosen such that the right-hand side of \eqref{eqn:list-crt-bnd} approximates $B$.  This method of parameter generation has not been considered previously in the literature, but offers the advantages that the primes $p_{i}$ may be taken much smaller than in previous methods and that there are no size requirements on the roots $r^{*}_{i}$.  However, list decoding algorithms are computationally expensive:  to compute all solutions of \eqref{eqn:list-crt-bnd}, existing algorithms perform lattice reduction on an $(\ell+1)$-dimensional lattice and compute the integer roots of an integer polynomial of degree at most $\ell$, with arithmetic operations for both steps performed on integers of size polynomial in $\ell$, $z$, $\log M$ and $\sum^{n}_{i=1}e_{i}\log p_{i}$.  Consequently, a search for parameters for Algorithm~\ref{alg:nfs-pol-gp-d-1=0} should avoid the extensive application of current list decoding algorithms.
 
\section*{Acknowledgements}

The author would like to thank Dr Victor Scharaschkin for many helpful discussions and suggestions throughout the preparation of this paper.

\bibliographystyle{amsplain}
\providecommand{\bysame}{\leavevmode\hbox to3em{\hrulefill}\thinspace}

\end{document}